\documentclass[12pt]{article}
\usepackage{epsfig,amsmath,amsfonts,amssymb,latexsym,color,amsthm,rotating,multirow,hhline}
\newtheorem{theorem}{Theorem}[section]
\newtheorem{prop}[theorem]{Proposition}
\newtheorem{lemma}[theorem]{Lemma}
\newtheorem{corr}[theorem]{Corollary}

\newcommand{\E}{{\mathbb E}}
\newcommand{\RR}{{\mathbb R}}
\newcommand{\R}{{\mathbb R}}
\newcommand{\Z}{{\mathbb Z}}
\newcommand{\PP}{\mathbb P}
\newcommand{\poi}{\mathcal{P}}
\newcommand{\mat}{\mathcal{M}}

\newcommand{\df}{\textbf}

\newcounter{mycount}
\newenvironment{romlist}{\begin{list}{\rm(\roman{mycount})}%
   {\usecounter{mycount}\labelwidth=1cm\itemsep -0pt}}{\end{list}}

\begin{document}

\title{Stable Poisson Graphs in One Dimension}

\author{Maria Deijfen
\thanks{Department of Mathematics, Stockholm University, 106 91 Stockholm.
{\tt mia at math.su.se}}
 \and Alexander E. Holroyd
 \thanks{Microsoft Research, 1 Microsoft Way, Redmond, WA 98052, USA;
{\tt holroyd at microsoft.com, peres at microsoft.com}}
\and Yuval Peres $^\dag$}

\date{April 2011}

\maketitle

\thispagestyle{empty}

\begin{abstract}
Let each point of a homogeneous Poisson process on $\RR$
independently be equipped with a random number of stubs
(half-edges) according to a given probability distribution
$\mu$ on the positive integers.  We consider schemes based on
Gale-Shapley stable marriage for perfectly matching the stubs
to obtain a simple graph with degree distribution $\mu$.  We
prove results on the existence of an infinite component and on
the length of the edges, with focus on the case $\mu(\{2\})=1$.
In this case, for the random direction stable matching scheme
introduced by Deijfen and Meester we prove that there is no
infinite component, while for the stable matching of Deijfen,
H\"aggstr\"om and Holroyd we prove that existence of an
infinite component follows from a certain statement involving a
{\em finite} interval, which is overwhelmingly supported by
simulation evidence.

\end{abstract}

\renewcommand{\thefootnote}{}
\footnotetext{Key words: Poisson process, random graph, degree distribution,
matching, percolation.} \footnotetext{AMS 2010 Subject Classification: 60D05,
05C70, 05C80.}

\section{Introduction}\label{intro}

Let $\poi$ be a homogeneous Poisson process with intensity 1 on
$\RR^d$ and $\mu$ a probability measure on the strictly
positive integers. We shall study translation-invariant simple
random graphs whose vertices are the points of $\poi$ and
where, conditional on $\poi$, the degrees of the vertices are
i.i.d.\ with law $\mu$. Previously, Deijfen \cite{D} has studied
achievable moment properties for the edges, and Deijfen,
H\"aggstr\"om and  Holroyd  \cite{DHH} have studied the question
of whether the graph contains a component with infinitely many
vertices. In the latter work a particular matching scheme,
called the stable multi-matching, was introduced, leading to a
number of challenging open questions. Here we restrict to $d=1$
and the focus is on the case $\mu(\{2\})=1$, one of the
simplest cases for which the question of existence of an
infinite component is non-trivial. For the stable
multi-matching and a variant of it with prescribed random stub
directions, we prove results on the component structure and on
the length of the edges. Figure 1 shows schematic pictures of the
two matchings, which are described below.

\begin{figure}
\centering
\resizebox{.9\textwidth}{!}{\includegraphics{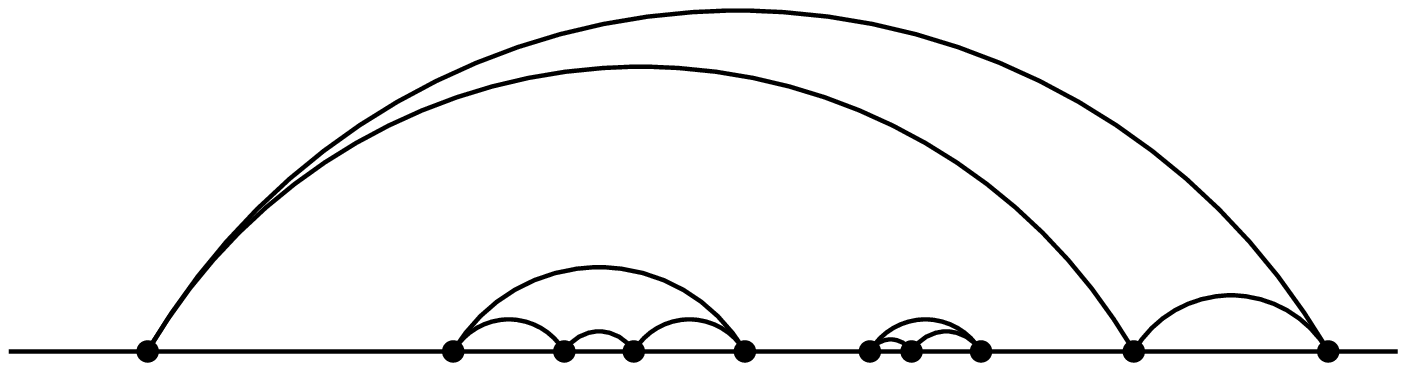}}
\resizebox{.9\textwidth}{!}{\includegraphics{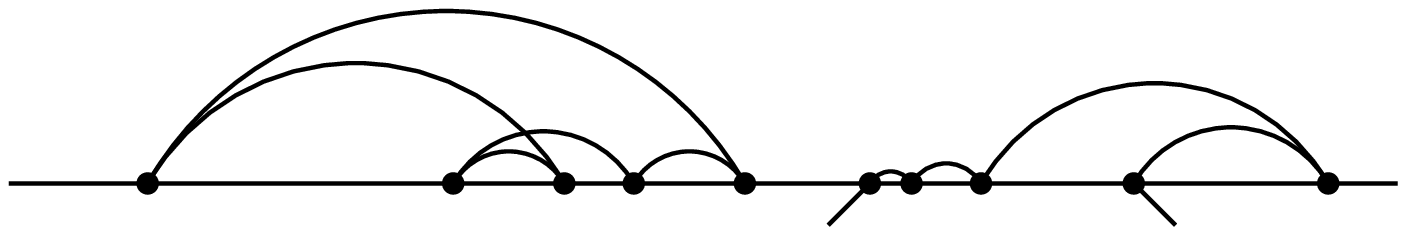}}
\caption{The stable multi-matching (top) and the random direction stable multi-matching,
for 10 vertices on a finite interval, with 2 stubs per vertex.
} \label{example}
\end{figure}

First we formally describe the objects that we will work with.
Write $[\poi]:=\{x\in\RR:\poi(\{x\})>0\}$ for the support, or
point-set, of $\poi$. Let $\xi$ be a random integer-valued
measure on $\RR$ with the same support as $\poi$, and which,
conditional on $\poi$, assigns i.i.d.\ values with law $\mu$ to
the elements of $[\poi]$. The pair $(\poi,\xi)$ is a marked
point process with positive integer-valued marks. For
$x\in[\poi]$ we write $D_x$ for $\xi(\{x\})$ and  interpret
this as the number of stubs at vertex $x$.

A {\bf matching scheme} for a marked process $(\poi,\xi)$ is a point process
$\mat$ on the space of unordered pairs of points in $\RR$, with the property
that almost surely for every pair $(x,y)\in[\mat]$ we have $x,y\in[\poi]$,
and such that in the graph $G=G(\poi,\mat)$ with vertex set $[\poi]$ and edge
set $[\mat]$, each vertex $x$ has degree $D_x$.  Our primary interest is in
the connected components of $G$.  The matching schemes under consideration
will always be {\bf simple}, meaning that $G$ has almost surely no self-loops
and no multiple edges, and {\bf translation-invariant}, meaning that $\mat$
is invariant in law under the action of all translations of $\RR$. We say
that a translation-invariant matching is a {\bf factor} if it is a
deterministic function of the Poisson process $\poi$ and the mark process
$\xi$, that is, if it does not involve any additional randomness. We write
$\PP$ and $\E$ for probability and expectation on the probability space
supporting the random triplet $(\poi,\xi,\mat)$.

Let $(\poi^*,\xi^*,\mat^*)$ be the Palm versions of $(\poi,\xi,\mat)$ with
respect to $\poi$ and write $\PP^*$ and $\E^*$ for the associated probability
law and expectation operator. Informally speaking, $\PP^*$ describes the
conditional law of $(\poi,\xi,\mat)$ given that there is a point at the
origin, with the mark process and the matching scheme taken as stationary
background; see e.g.\ \cite[Chapter 11]{K} for more details.  Since $\poi$ is
a Possion process, we have $[\poi^*]\stackrel{d}{=} [\poi]\cup \{0\}$.

We now define the two matching schemes that will be analyzed in
the paper.

\subsubsection*{Stable multi-matching}

The concept of stable matching was introduced by Gale and
Shapley \cite{GS}.  It has been studied in \cite{HPPS} and
\cite{HP} in the context of spatial point processes (with
$\mu(\{1\})=1$ in our notation). A natural generalization to
other degree distributions $\mu$ was introduced in \cite{DHH}
and is referred to as the stable multi-matching. Formally, a
matching scheme $\mat$ is said to be a {\bf stable
multi-matching} if a.s., for any two distinct points $x,y \in
[\poi]$, either they are linked by an edge or at least one of
them has no incident edges longer than $|x-y|$.  Here and
throughout, distance and edge length refer to the Euclidean
norm $|\cdot|$ on $\RR$.

We will restrict our attention to the case when $\poi$ is a Poisson process.
For this case, it was proved in \cite[Proposition 2.2]{DHH} that there is an
a.s.\ unique stable multi-matching, which moreover can be constructed by the
following iterative procedure.  First connect all mutually closest pairs of
points in $[\poi]$ and remove one stub from each of these point. Then call
two points compatible if they do not already have an edge between them and if
both of them have at least one stub left. Connect all mutually closest
compatible pairs and remove one stub from each of the points just matched. Repeat indefinitely.
See \cite[Propostion 2.2]{DHH}.\enlargethispage*{1cm}

\subsubsection*{Random direction stable multi-matching}

We introduce a variant of stable multi-matching where the directions of the
edges are prescribed independently of the Poisson process. As described
above, the process $\xi$ assigns a mark $D_x$ to each point $x\in[\poi]$. Let
$\psi$ be a second mark process which, conditionally on $\poi$ and $\xi$,
assigns an integer $R_x\sim\mbox{Binomial}(D_x,1/2)$ independently to each
point $x\in[\poi]$.  We think of $R_x$ as the number of stubs incident with
$x$ that are to be matched to the right of $x$.  If $x<y$, and $(x,y)$ is an
edge of a matching scheme $\mat$, we call $(x,y)$ a right-edge of $x$, and a
left-edge of $y$. A matching scheme $\mat$ is now said to be a {\bf random
direction stable multi-matching} if each point $x\in[\poi]$ has exactly $R_x$
incident right-edges and if a.s., for any two distinct points $x,y \in
[\poi]$ with $x<y$, either they are linked by an edge, or $x$ has no incident
right-edges longer than $|x-y|$, or $y$ has no incident left-edges longer
than $|x-y|$.

Let each point $x\in[\poi]$ be equipped with $R_x$ stubs
pointed to the right and $L_x:=D_x-R_x$ stubs pointed to the
left, and consider the following iterative procedure for
matching right-stubs to left-stubs.  First consider all pairs
of consecutive points in $[\poi]$. Create an edge between every
such pair $x<y$ such that $x$ has at least one right-stub and
$y$ has at least one left-stub, and remove the corresponding
stubs. Then consider pairs of points in $[\poi]$ with precisely
one point in $[\poi]$ in between them. Create an edge between
every such pair of points $x<y$ such that $x$ has at least one
right-stub and $y$ has at least one left-stub left, and remove
the corresponding stubs. Continue indefinitely, with pairs of
points separated by an increasing number of points. This
procedure has previously been studied in \cite{DM}. We show in
Section \ref{prel} that it leads to the unique stable
multi-matching subject to the prescribed (random) directions for the
edges.

\subsection{Results}\label{results}

In this section we collect the main results. The proofs are
then given in Section \ref{proofs}. The first result concerns
uniqueness of the infinite component.

\begin{prop}\label{prop:unik}
For a Poisson process on $\RR$ and any degree distribution, in
the stable multi-matching and the random direction stable
multi-matching, there is at most one infinite component.
\end{prop}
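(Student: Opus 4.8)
The plan is to reduce everything to a single deterministic structural feature of stable matchings in one dimension --- that edges belonging to different components never cross --- and then to combine this with translation invariance.

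\textbf{Step 1 (non-crossing).} First I would prove the following claim, valid a.s.\ for both schemes: if $(a,b)$ and $(c,d)$ are edges with $a<c<b<d$, then $b$ and $c$ lie in the same component. Suppose not. Then $b\not\sim c$, while $b$ carries the incident edge $(a,b)$ of length $b-a>b-c$ and $c$ carries $(c,d)$ of length $d-c>b-c$. For the stable multi-matching this immediately contradicts stability applied to the pair $\{b,c\}$. For the random direction scheme, note that $(a,b)$ is a left-edge of $b$ and $(c,d)$ is a right-edge of $c$, both longer than $b-c$, which contradicts the directional stability condition for the pair $c<b$. Hence edges of two distinct components cannot interleave.

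\textbf{Step 2 (trapping).} Next I would show that any two distinct infinite components $C_1,C_2$ are linearly separated, i.e.\ every vertex of one lies to the left of every vertex of the other. If not, the vertex sets alternate in order, so after possibly swapping $C_1,C_2$ there is a vertex $v\in C_2$ and vertices $p,q\in C_1$ with $p<v<q$. Joining $p$ to $q$ by a path in $C_1$ and using that no path-vertex equals $v$, some edge $(a,b)$ of this path straddles $v$, with $a<v<b$ and $a,b\in C_1$. By Step 1, every neighbour in $C_2$ of a vertex lying in $(a,b)$ again lies in $(a,b)$ --- otherwise that edge would interleave with $(a,b)$, forcing an endpoint of $C_2$ to share a component with an endpoint of $C_1$. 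Propagating this along $C_2$ starting from $v$ shows $C_2\subseteq(a,b)$. But $(a,b)$ is a bounded interval and $[\poi]$ is locally finite, so $C_2$ would be finite, a contradiction.

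\textbf{Step 3 (counting and invariance).} Linear separation lets me order the infinite components along $\RR$. Any infinite component with an infinite component on each side of it is bounded on both sides, hence finite; so at most the leftmost and rightmost of the ordering can be infinite, giving at most two infinite components in every realisation. To exclude exactly two, I would invoke ergodicity of the Poisson process under translations, the matching being a translation-invariant factor of $(\poi,\xi)$, respectively of $(\poi,\xi,\psi)$: the number of infinite components is a translation-invariant measurable quantity, hence a.s.\ constant. If it equalled $2$ a.s., the single gap between the two separated components would determine a translation-equivariant point of $\RR$, that is, a stationary simple point process with exactly one point a.s.; such a process has intensity $0$ and therefore a.s.\ no points, a contradiction. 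Thus there is at most one infinite component.

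The routine verifications are the measurability and translation invariance needed for the ergodicity step. The substantive point is the non-crossing claim of Step 1 together with the trapping induction of Step 2, which is where the linear geometry and the stability condition do the real work; the elimination of the case of exactly two infinite components in Step 3 is the only place global invariance (rather than a pathwise argument) is essential.
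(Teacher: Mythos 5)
Your proof is correct, but it takes a genuinely different route from the paper's in how translation invariance enters. Your Step 1 is precisely the paper's key observation (\ref{eq:cross_conn}): stability (respectively, directional stability) forces the inner endpoints $c$ and $b$ of crossing edges to be adjacent, so distinct components cannot have crossing edges. From there the paper proceeds via its mass-transport Lemma~\ref{le:both_directions}: an infinite component must be unbounded in \emph{both} directions, so two distinct infinite components would interleave and hence have crossing edges, a contradiction. You never prove (or need) two-sided unboundedness; instead you allow one-sided infinite components, show by the trapping argument that distinct infinite components must be linearly separated (a non-separated one would be confined to a bounded interval, hence finite), conclude there are at most two, and then eliminate the two-component case by noting that the rightmost point of the left component would be a translation-equivariant distinguished point, i.e.\ a stationary point process with exactly one point almost surely --- impossible, since its intensity measure would be a translation-invariant measure on $\RR$ of total mass one. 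The trade-off: the paper's argument is shorter because Lemma~\ref{le:both_directions} is already available (and is reused in the proofs of Theorems~\ref{th:smrd} and~\ref{th:stable}), while yours is self-contained, replacing the mass-transport principle by an elementary intensity computation; both rest on the same stability-based non-crossing observation. One small remark: your appeal to ergodicity in Step 3 is harmless but unnecessary --- you can run the intensity argument directly on the event that there are exactly two infinite components, defining the distinguished point only on that event, which yields a stationary point process whose expected total number of points is positive and finite, already a contradiction.
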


The next result asserts that, in the case $\mu(\{2\})=1$ of two
stubs per vertex, the random direction stable multi-matching
has no infinite components. For other degree distributions the
existence of an infinite component remains an open question.
Part (b) of the theorem however provides some information on
the edge length. See \cite[Theorem 4.1]{DM} and \cite[Theorem
2]{HPPS} for related results.

For $x\in[\poi]$, let $X_x$ denote the average length of all
edges incident to $x$, and write $X=X_0$ for the value at the
origin in the Palm version of the process.

\begin{theorem}\label{th:smrd}For a Poisson process on $\RR$,
consider the random direction stable multi-matching.
\begin{romlist}
\item For $\mu(\{2\})=1$, almost surely there is no
    infinite component.
\item For any degree distribution with bounded support,
    we have \linebreak $\E^*[X^{1/2}]=\infty$.
\end{romlist}
\end{theorem}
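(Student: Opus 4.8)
The plan is to treat the two parts separately, in both cases exploiting the signed random walk associated with the prescribed directions. Enumerate the points of $\poi^*$ as $\cdots<x_{-1}<x_0=0<x_1<\cdots$ and define a two-sided walk by $W_0=0$ and $W_n-W_{n-1}=R_{x_n}-L_{x_n}$, recording the running excess of right-stubs over left-stubs. Since $R_x\sim\mathrm{Binomial}(D_x,1/2)$ gives $\E[R_x-L_x]=\E[2R_x-D_x]=0$, and the support of $\mu$ is bounded in part (ii) (while $D_x\equiv2$ in part (i)), $W$ is a mean-zero, finite-variance, hence oscillating recurrent walk. The observation driving everything is a conservation law: because each right-stub is matched to its right and each left-stub to its left, for any location $p$ in the gap $(x_n,x_{n+1})$ the number of edges straddling $p$ changes by exactly $R_{x_{n+1}}-L_{x_{n+1}}$ as $p$ crosses $x_{n+1}$, so crossing numbers track the increments of $W$.

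For part (i) I would reduce to showing that the component of $x_0=0$ is almost surely finite. Since all degrees equal $2$, every component is a finite cycle or a bi-infinite path, and by Proposition \ref{prop:unik} at most one bi-infinite path can occur. I would try to produce, on each side of the origin, infinitely many \emph{sealing gaps} — locations $p$ such that no edge of the origin's \emph{own} component straddles $p$ and all of that component's vertices lie on one side — trapping the component in a finite window. Recurrence of $W$ supplies gaps across which the global stub-balance closes up, and the task is to upgrade this into a statement about a single component. The main obstacle is that the crossing numbers are in fact infinite at every gap: the conservation law forces the excursions of $W$ down to $\liminf_n W_n=-\infty$ to be absorbed by infinitely many straddling edges, so one cannot seal a component merely by locating an empty gap. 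Instead one must show that these infinitely many straddling edges almost surely belong to infinitely many distinct \emph{finite} components rather than chaining into one bi-infinite path, and I expect this to require a finer analysis of how the nearest-compatible rule assigns partners across the excursions of $W$ — ruling out an infinite chain either by a deterministic argument on the matching or by a mass-transport/unimodularity contradiction with recurrence.

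For part (ii) I would establish the heavy tail directly. It suffices to bound below the expected number of edges of length exceeding $t$ that straddle the origin, since by the Palm–stationary correspondence this quantity equals $\tfrac{\E[D]}{2}\,\E_{\mathrm{edge}}[\ell\,\mathbf{1}\{\ell>t\}]$ for the typical edge length $\ell$, and a bound $\E_{\mathrm{edge}}[\ell\,\mathbf{1}\{\ell>t\}]\gtrsim\sqrt{t}$ integrates to $\E_{\mathrm{edge}}[\ell^{1/2}]=\infty$; because $D$ is bounded and $X$ is at least the maximal incident edge length divided by $D$, this yields $\E^*[X^{1/2}]=\infty$. To obtain the $\sqrt{t}$ bound I would count right-stubs located in $(-t,0)$ that are forced to be matched beyond $t$: whenever the walk started at such a point stays above its initial level throughout a window of order $t$ points, the conservation law leaves its right-stub with no available left partner before $W$ descends, i.e.\ at distance exceeding $t$. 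The probability that a mean-zero finite-variance walk stays positive for $n$ steps is of order $n^{-1/2}$ (Sparre-Andersen), so summing over the $\asymp t$ candidate starting points gives an expected count of order $\sqrt{t}$.

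The hardest step is part (i): finiteness of components must be argued in the presence of infinitely many long edges crossing every point, so recurrence of $W$ alone is insufficient and one needs genuine control of the partner assignment made by the stable rule. In part (ii) the only delicate point is making the ``forced long match'' count rigorous — verifying that a sustained positive excursion of $W$ truly strands a right-stub past distance $t$ rather than letting the nearest-compatible rule match it earlier — which again rests on the conservation law rather than on the detailed geometry of the matching.
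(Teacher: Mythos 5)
Both parts of your proposal have genuine gaps; in part (i) you acknowledge it, and in part (ii) the reduction itself is wrong, not merely incomplete.

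For (i), you construct the right object (your $W$ is the paper's walk $F$ up to sign) and you correctly deduce from your conservation law that every gap is crossed by infinitely many edges, so that no ``sealing gap'' exists; but the step that actually rules out a bi-infinite path --- showing the infinitely many crossing edges belong to infinitely many distinct \emph{finite} components --- is exactly what you leave open. The paper closes this with a deterministic structure theorem coming from stability, property (\ref{eq:cross_conn}): if edges $(a,b)$ and $(c,d)$ cross, the edge $(c,b)$ must be present. From this one deduces that in any component every non-extremal right-beak ($R_x=2$) is immediately followed by a left-beak ($L_x=2$), and that any other component sits entirely inside a single gap of a given component. Hence, if an infinite component existed, every $+2$ step of the walk (a left-beak) would be preceded in a nested, well-parenthesized way by a $-2$ step (its right-beak partner), so the walk would be bounded above by a random constant --- contradicting the a.s.\ unboundedness of the (lazy) simple symmetric random walk. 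Note that this is a purely combinatorial consequence of the stable rule; no mass-transport or unimodularity argument is involved, and it is precisely the ``genuine control of the partner assignment'' you say you are missing.

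For (ii), two things fail. First, the quantity you reduce to, the expected number of edges of length $>t$ straddling the origin, equal to $\tfrac{\E[D]}{2}\,\E_{\mathrm{edge}}[\ell\,\mathbf{1}\{\ell>t\}]$, is identically infinite in this model: by your own conservation law the number $U_n$ of edges crossing the $n$th gap satisfies $U_n=U_0+W_n$ with $U_n\ge 0$ and $\liminf_n W_n=-\infty$, forcing $U_n=\infty$ for every $n$; so your lower bound is vacuous. Second, even granted $\E_{\mathrm{edge}}[\ell\,\mathbf{1}\{\ell>t\}]\ge c\sqrt t$ for all $t$, this does not ``integrate to'' $\E_{\mathrm{edge}}[\ell^{1/2}]=\infty$: since $t\mapsto\E[\ell\,\mathbf{1}\{\ell>t\}]$ is non-increasing, that bound is equivalent to $\E[\ell]=\infty$, and a tail $\PP(\ell>s)\asymp s^{-3/4}$ gives $\E[\ell]=\infty$ with $\E[\ell^{1/2}]<\infty$. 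The paper instead bounds a \emph{finite} quantity, the expected number of right-stubs in $[0,2t]$ matched outside $[0,2t]$: it is at most $k\,\E^*[X\wedge t]$ (each point has at most $k$ exiting edges, each longer than its distance to the boundary --- note the truncation, which is the crux), and at least $\E\big[(R[0,2t]-L[0,2t])^+\big]\sim c\,t^{1/2}$ by the central limit theorem, with no Sparre--Andersen persistence estimate and no ``forced long match'' claim needed. The truncated-mean bound $\E^*[X\wedge t]\ge c'\sqrt t$ \emph{does} imply $\E^*[X^{1/2}]=\infty$ by dominated convergence. If you wanted to salvage your route, the correct target would be a genuine tail bound $\PP_{\mathrm{edge}}(\ell>t)\gtrsim t^{-1/2}$ (which your stranded-stub count would yield if the delicate forced-match claim could be proved), not the tail-restricted mean; but the paper's window-counting argument makes all of that unnecessary.
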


Turning to the stable multi-matching, it was proved in \cite[Theorem
1.2(b)]{DHH} that there is no infinite component when the only possible
values for the degrees are 1 and 2, with a strictly positive probability of
degree 1.  In $d\geq 2$ it was also proved that there is an integer $k=k(d)$
such that if all vertices almost surely have degree at least $k$, then there
is almost surely an infinite component, \cite[Theorem~1.2(a)]{DHH}.  Note
that, by ergodicity, the event that there exists an infinite component has
probability 0 or 1 for any degree distribution. The following result relates
the existence of an infinite component for the case $\mu(\{1,2\})=1$ in $d=1$
to a certain property concerned with the lengths of the edges. Let $M_x$
denote the length of the longest edge incident to $x\in[\poi]$, say that $x$
{\bf desires} a site $y\in\mathbb{R}$ if $|y-x|<M_x$ and write $N$ for the
number of points in $[\poi]$ that desire the origin.

\begin{theorem}\label{th:stable}\sloppypar For a Poisson process on $\RR$,
consider the stable multi-matching.
\begin{romlist}
\item For any degree distribution, if there is no infinite
    component, then \linebreak $N=\infty$ almost surely.
\item If $\mu(\{1,2\})=1$ and there is an infinite
    component, then $N<\infty$ almost surely.
\end{romlist}
\end{theorem}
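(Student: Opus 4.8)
My plan starts from a \emph{confinement} property of stable matchings that I would derive directly from the definition. Suppose $(x,y)\in[\mat]$ with $x<y$ and let $z\in[\poi]\cap(x,y)$ be matched to neither $x$ nor $y$. Applying stability to the unmatched pair $\{x,z\}$, and using that $x$ carries the edge to $y$ of length $y-x>|x-z|$, forces $M_z\le z-x$; applying it to $\{z,y\}$ forces $M_z\le y-z$. Hence every neighbour of $z$ lies in $(x,y)$. In words, every point strictly inside an edge $(x,y)$, except the at most $(D_x-1)+(D_y-1)$ that are directly attached to an endpoint, has its entire incident edge-set trapped inside $(x,y)$. I would also record the reflection symmetry $x\mapsto -x$ of the construction, and recall that by ergodicity the existence of an infinite component is a $0/1$ event.

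For part~(i) I would prove the contrapositive: if $N<\infty$ then an infinite component exists. Finiteness of $N$ yields an $R$ with $M_x\le|x|$ for all $|x|>R$, so beyond $\pm R$ no edge reaches across the origin and edges are short relative to position; the aim is to show that the points of $(R,\infty)$ cannot split into only finite components. Here I would stress that no purely deterministic argument can succeed: placing far-apart small cycles produces a stable matching with $N<\infty$ and no infinite component, so translation-invariance must enter essentially. The route I would try is to track $E(t)$, the number of edges straddling the level $t$ — a stationary $\Z_{\ge0}$-valued process whose jump at a point $x$ is $R_x-L_x$ (right minus left degree), with mean zero by reflection symmetry — and to argue that a translation-invariant decomposition into only finite pieces would force the increments to be degenerate ($R_x=L_x$ a.s.), which makes every point have both a left- and a right-neighbour and hence produces a bi-infinite path. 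I expect the main obstacle of part~(i) to be exactly this step: the increments $R_x-L_x$ are strongly dependent (a deterministic factor of the matching), so the naive recurrence argument for mean-zero walks does not apply, and one must also upgrade a positive-probability conclusion to an almost-sure one.

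For part~(ii) degrees lie in $\{1,2\}$, so the graph is a union of finite paths, finite cycles and, by Proposition~\ref{prop:unik}, at most one infinite path $\Pi$, which I assume present. Suppose $\Pi$ reaches $+\infty$; by ergodicity its vertex set then has a positive density $\delta$, so $\#(\Pi\cap[0,T])\sim\delta T$. Consider a desiring point to the right of $R$ whose long edge $(x,y)$ straddles the origin ($x<0<y$, $y$ large) and belongs to $\Pi$. Since $x$ and $y$ each have degree at most $2$, if both continue $\Pi$ outside $[x,y]$ then $\Pi$ jumps directly from $x$ to $y$ and, by the confinement property, the open interval $(x,y)$ contains no point of $\Pi$; but then $[0,y]$ is a $\Pi$-void, contradicting $\#(\Pi\cap[0,y])\sim\delta y$ for large $y$. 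Driving the analysis in the direction(s) in which $\Pi$ is infinite, this rules out all but finitely many origin-straddling long edges of $\Pi$.

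The remaining desiring points — those whose long edge reaches \emph{forward} rather than across the origin, those lying on finite components, and the borderline cases where a straddling edge of $\Pi$ has one endpoint continuing into its own interior — are what make part~(ii) delicate. For these I would reapply the confinement property together with the same positive-density bound, now to the finite components enclosed by long edges, to show that long edges at arbitrarily large scales would force either a $\Pi$-void or a second infinite component, contradicting the density of $\Pi$ or its uniqueness. Turning this case analysis into a complete argument is, I expect, the hardest part of the theorem.
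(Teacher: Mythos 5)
Both parts of your proposal contain genuine gaps, and in both cases the gap sits exactly at the step that decides the theorem. For part (i), your plan (contrapositive via the crossing-count process $E(t)$ with increments $R_x-L_x$) is not completed, and you acknowledge this yourself: the implication ``translation-invariant decomposition into finite pieces $\Rightarrow$ $R_x=L_x$ a.s.'' is unsupported, the increments are a dependent factor of the matching so no random-walk recurrence argument applies, and the process $E(t)$ need not even be finite or integrable in general. The paper's proof of (i) uses an entirely different, insertion/deletion mechanism that your sketch has no substitute for. Set $\widetilde N$ equal to the number of vertices desiring the interval $(-1,1)$. First, $\PP(\widetilde N=0)=0$: on that event one can insert an independent uniform point in $[0,1]$, and by \cite[Lemmas 16 and 18]{HPPS} its stubs could never be matched, contradicting the a.s.\ perfectness of the stable multi-matching \cite[Proposition 2.2]{DHH}. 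Second, if all components are finite and $\PP(\widetilde N<\infty)>0$, delete the (a.s.\ finitely many) vertices desiring $(-1,1)$ together with their components; the modified configuration has $\widetilde N=0$, contradicting the first step. Ergodicity then upgrades $\widetilde N=\infty$ to $N=\infty$ a.s. Nothing in your outline plays the role of this argument, so part (i) is missing, not merely incomplete.

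For part (ii), your confinement property is correct (it is the general-degree form of the paper's ``no crossing edges'' observation for degrees in $\{1,2\}$), and your density/void argument for vertices of the infinite path $\Pi$ is a legitimate alternative to the paper's mass-transport statement, Lemma \ref{lemma:next_one}. But the cases you explicitly defer --- finite-component vertices, and vertices whose long edge points away from the origin --- are where the proof actually lives, and what is needed there is not a large case analysis but one short stability argument which your proposal lacks. Since edges do not cross, every finite component is confined to a single gap $(u,v)$ between consecutive vertices of $\Pi$. Suppose $x\in(u,v)$ with $0<u$ desires the origin (whichever direction its long edge points): then $M_x>x>x-u$, so $x$ desires $u$; but $u$ desires $x$ as well, because $u$'s edge to $v$ has length $v-u>x-u$. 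Stability then forces the edge $(u,x)$, contradicting confinement of $x$'s component inside $(u,v)$. Hence the only finite-component vertices that can desire the origin are the a.s.\ finitely many points in the single gap $I_0$ straddling the origin, and combined with the finiteness for $\Pi$-vertices this yields $N<\infty$. Without this mutual-desire step (or a fully executed void argument in its place), your part (ii) remains a plan rather than a proof.
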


\noindent For degree distributions with $\mu(\{1,2\})=1$,
existence of an infinite component in the stable multi-matching
is hence equivalent to $N<\infty$. On the other hand, $N$ is
related to edge lengths, as follows. Write $M=M_0$.

\begin{lemma}\label{le:NM}
For any translation-invariant matching scheme, we have that
$\E^*[M]<\infty$ if and only if $\E[N]<\infty$.
\end{lemma}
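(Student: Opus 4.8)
The plan is to prove the sharper exact identity $\E[N]=2\,\E^*[M]$, from which the stated equivalence is immediate. Note first that $M$ is almost surely finite: every vertex has a finite degree (the marks take values in the positive integers), so its longest incident edge is the maximum of finitely many finite lengths. I would then rewrite $N$ as a sum over the points of $\poi$. A point $x\in[\poi]$ desires the origin precisely when $|x|<M_x$, that is, when $0$ lies in the open interval $(x-M_x,\,x+M_x)$ of length $2M_x$ centred at $x$; hence
\[
N=\sum_{x\in[\poi]}\mathbf 1\{|x|<M_x\}.
\]

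The key step is the refined Campbell theorem (the Campbell--Mecke formula) relating the stationary law $\PP$ to its Palm version $\PP^*$. Writing $\omega=(\poi,\xi,\mat)$ and $\theta_x\omega$ for the configuration shifted so that $x$ is sent to the origin, translation-covariance of the matching scheme gives $M_x=M(\theta_x\omega)$, where $M=M_0$ is the longest edge length at the origin point. With $h(u,\omega):=\mathbf 1\{|u|<M(\omega)\}$, so that $h(x,\theta_x\omega)=\mathbf 1\{|x|<M_x\}$, the Campbell--Mecke identity (with intensity $1$) gives
\[
\E[N]=\E\Bigl[\,\sum_{x\in[\poi]}h(x,\theta_x\omega)\Bigr]=\int_\RR \E^*\bigl[\mathbf 1\{|u|<M\}\bigr]\,du.
\]
Since the integrand is nonnegative, Tonelli's theorem lets me interchange the integral and the Palm expectation, so that
\[
\E[N]=\E^*\Bigl[\int_\RR \mathbf 1\{|u|<M\}\,du\Bigr]=\E^*[2M].
\]
The equivalence $\E^*[M]<\infty\iff\E[N]<\infty$ follows at once.

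The only delicate point is verifying the hypotheses needed to invoke Campbell--Mecke: that $(u,\omega)\mapsto\mathbf 1\{|u|<M(\omega)\}$ is jointly measurable, and that $x\mapsto M_x$ is a genuine translation-covariant mark of the marked point process $(\poi,\xi,\mat)$. Both are consequences of the translation-invariance of the scheme together with the fact that $M_x$ is a measurable function of the locally finite edge set incident to $x$. I expect no analytic difficulty beyond this bookkeeping; conceptually the identity is just the standard fact that the mean number of translation-covariant intervals covering a fixed point equals the intensity times the Palm-mean interval length.
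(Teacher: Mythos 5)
Your proof is correct, and it yields the same sharpened identity $\E[N]=2\,\E^*[M]$ that the paper's argument establishes; the difference lies only in the tool used to exchange stationary and Palm expectations. The paper phrases the exchange via its mass transport principle (Lemma~\ref{le:mt}): each point $x$ sends out mass $2M_x$ spread uniformly over $(x-M_x,x+M_x)$, the expected mass leaving the unit interval is $2\,\E^*[M]$ (a step which is precisely the Campbell/Palm identity you invoke), the expected mass arriving is $\E\int_0^1 N_u\,du=\E[N]$, and the two are equated by the MTP. You instead write $N=\sum_{x\in[\poi]}\mathbf{1}\{|x|<M_x\}$ and apply the refined Campbell (Campbell--Mecke) theorem followed by Tonelli. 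These are two packagings of the same computation: the paper's MTP is itself proved by a Fubini argument over $\Z$-translates of the unit cube, which is exactly the role Campbell--Mecke plays for you. Your version is more direct and produces the constant $2$ without any auxiliary construction, at the price of importing the refined Campbell theorem as a black box; the paper's version is self-contained (its MTP is proved in two lines) and keeps the method uniform with the rest of the paper, where mass transport is reused several times (e.g.\ Lemmas~\ref{le:both_directions} and~\ref{lemma:next_one}). Your bookkeeping remark that $M_x(\omega)=M(\theta_x\omega)$ holds pathwise, so that only joint stationarity of $(\poi,\xi,\mat)$ is needed for the Palm machinery, is correct, and the same fact is used implicitly in the paper's computation of the outgoing mass.
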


In view of this relation, $E^*[M]<\infty$ would imply that
$N<\infty$, and thereby establish the existence of an infinite
component for $\mu(\{2\})=1$ in the stable multi-matching.
However, the best result we have in this direction is the
following, which applies in any dimension $d\geq 1$ (the stable
multi-matching is defined analogously in all dimensions; see
\cite{DHH}).

\begin{prop}\label{prop:M_bound}
For a Poisson process of intensity $1$ on $\RR^d$, and any
degree distribution with bounded support, in the stable
multi-matching we have $\PP^*(M>t)\leq ct^{-d}$ for some
$c\in(0,\infty)$ (depending only on $d$ and the bound on
degree).
\end{prop}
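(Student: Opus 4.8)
The plan is to avoid any analysis of the matching dynamics and instead extract a single deterministic separation property from the stability condition, turn it into a packing (density) bound for the set of points carrying a long edge, and finally read off the tail of $M$ from the intensity interpretation of the Palm probability. Throughout let $D<\infty$ be the essential supremum of the degree, which exists since $\mu$ has bounded support, and set $\mathcal{E}_t:=\{x\in[\poi]: x \text{ has an incident edge of length} >t\}$, so that in the Palm version $\{M>t\}=\{0\in\mathcal{E}_t\}$.

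The key step, and really the only idea, is the following consequence of stability: any two distinct points $x,y\in\mathcal{E}_t$ that are not joined by an edge must satisfy $|x-y|>t$. Indeed, by the defining property of the stable multi-matching, if $x$ and $y$ are not linked then at least one of them has no incident edge longer than $|x-y|$; but by assumption both $x$ and $y$ carry an edge of length $>t$, so if we had $|x-y|\le t$ then both would possess an edge longer than $|x-y|$, contradicting stability. Hence $|x-y|>t$. I expect this to be the main obstacle only in the sense that one must isolate the right object $\mathcal{E}_t$; once that is done the implication is immediate and requires nothing beyond the definition.

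Next I would convert this separation into a density bound. Fix any ball $B\subset\RR^d$ of radius $t/3$. Any two points of $\mathcal{E}_t\cap B$ lie within distance $2t/3<t$ of each other, so by the separation property they are necessarily linked by an edge; thus $\mathcal{E}_t\cap B$ is a clique in the simple graph $G$. Since $G$ is simple and all degrees are at most $D$, every vertex of a clique has degree at least (clique size $-1$), so $|\mathcal{E}_t\cap B|\le D+1$. Covering $B(0,R)$ by a family of balls of radius $t/3$ of bounded overlap, with at most $C_d (R/t)^d$ of them for a dimensional constant $C_d$, yields the deterministic bound $|\mathcal{E}_t\cap B(0,R)|\le (D+1)\,C_d\,(R/t)^d$ for all $R$; this is routine packing and I would not belabor it.

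Finally I would pass to the Palm law. Because $\poi$ has intensity $1$, the Palm probability of a property holding at the origin equals the intensity (expected number per unit volume) of the set of points of $\poi$ enjoying that property; applied here, $\PP^*(M>t)=\PP^*(0\in\mathcal{E}_t)$ equals the intensity of $\mathcal{E}_t$. Taking expectations of the deterministic density bound over $B(0,R)$ and letting $R\to\infty$ (equivalently, invoking the ergodic theorem) gives that this intensity is at most $(D+1)\,C_d\,t^{-d}$, so $\PP^*(M>t)\le c\,t^{-d}$ with $c=(D+1)C_d$ depending only on $d$ and $D$, as claimed. The only points to check carefully are that the clique bound genuinely uses simplicity of $G$ (ruling out multiple edges inflating the clique), that the covering constant $C_d$ is dimensional, and that the intensity/Palm identity is used in the unit-intensity form; none of these presents a real difficulty.
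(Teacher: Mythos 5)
Your proof is correct and follows essentially the same route as the paper's: stability forces any two points carrying edges longer than $t$ and lying within distance $t$ of each other to be adjacent, so such points in a ball of radius comparable to $t$ form a clique of size at most (degree bound)$+1$, and the Palm/intensity identity then converts this deterministic count into the tail bound $\PP^*(M>t)\le ct^{-d}$. The paper simply works with the single ball $B(0,t/2)$ and applies the intensity identity directly, so your covering of $B(0,R)$ and the limit $R\to\infty$ are harmless but unnecessary extra steps.
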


\paragraph{A ``statistical proof'' of percolation.}
It is not rigorously known whether the stable multi-matching with $\mu(\{2\})=1$
will have an infinite component in $d=1$.  However, in Section~\ref{MC-proof} we
present compelling evidence that this is indeed the case. Specifically, we
will define a certain event $G_L$ {\em in terms of a Poisson process on the
bounded interval} $[0,L]$. We will prove rigorously that for any $L>0$,
\[
\PP(G_L)>0.968 \quad\text{implies existence of an infinite component.}
\]
On the other hand, since $G_L$ is defined in terms of a bounded
interval, its probability can be estimated by Monte-Carlo
simulation.  Such simulations provide overwhelming statistical
evidence that
$$\PP(G_{13000})>0.968.$$

The random direction stable multi-matching and the stable multi-matching are
hence qualitatively different: when the directions of the stubs are
prescribed randomly, there is no infinite component, while when the
directions are prescribed by the positions of the Poisson points (as in the
stable multi-matching) there is an infinite component. Figure 2 shows
simulation pictures of the random direction stable multi-matching and the
stable multi-matching, respectively, with $\mu(\{2\})=1$ in $d=1$.

\begin{figure}
\centering
\resizebox{.9\textwidth}{!}{\includegraphics{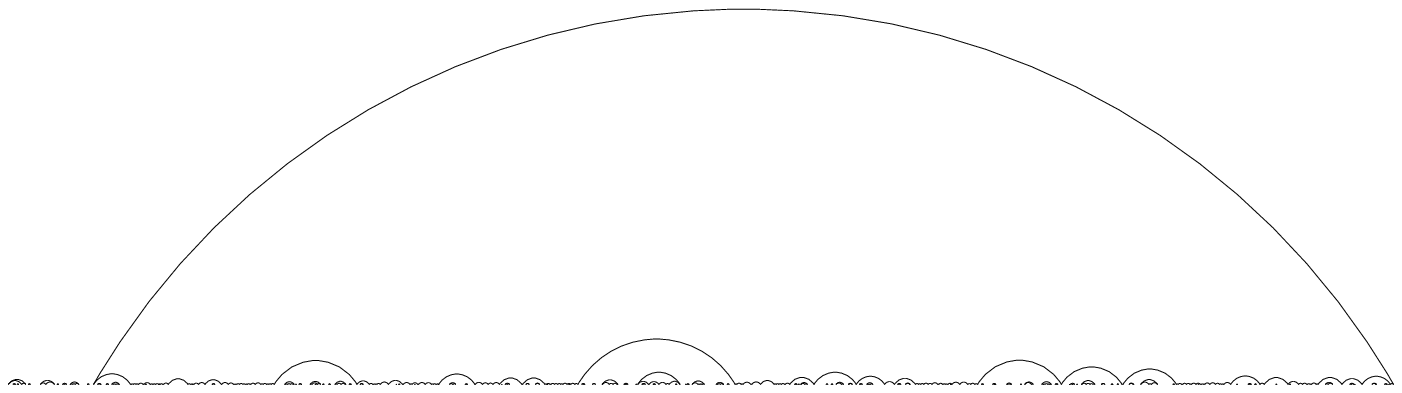}}
\resizebox{.9\textwidth}{!}{\includegraphics{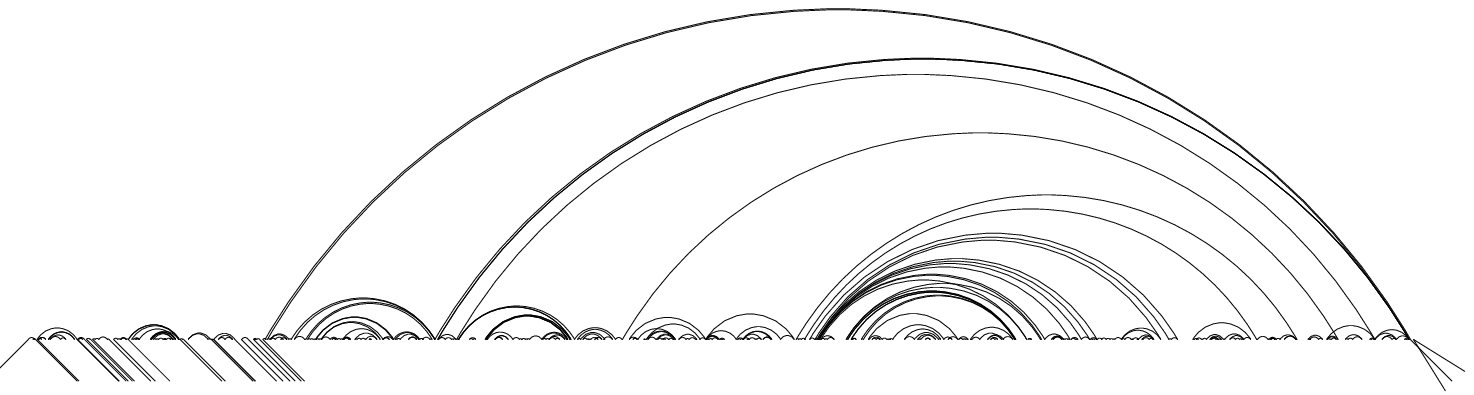}}
\caption{The stable multi-matching (top) and the random direction stable multi-matching,
with 2 stubs per vertex, for 500 uniformly random points on an interval. }
\end{figure}

Say that an edge $(x,y)\in[\mat]$ {\bf crosses} a site $z\in\RR$ if $x<z<y$.
Our last result is the following.

\begin{prop}\label{prop:odd_deg}
For a Poisson process on $\RR$, and any degree distribution
$\mu$ whose support includes some odd integer, for any factor
matching scheme, the number of edges that cross the origin is
infinite.
\end{prop}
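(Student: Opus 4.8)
The plan is to work throughout with the crossing-count process
$C(z):=\#\{(x,y)\in[\mat]:x<z<y\}$ and to prove that $C(0)=\infty$ almost surely. The first step is a $0$--$1$ law reduction. For any reals $z<z'$ the difference $C(z')-C(z)$ is a sum over the a.s.\ finitely many points of $[\poi]$ in $(z,z']$, so $\{C(0)<\infty\}$ coincides with the translation-invariant event $\{C(z)<\infty\text{ for all }z\}$; since the matching is a factor, the underlying marked Poisson process is ergodic under translations and this event has probability $0$ or $1$. It therefore suffices to rule out ``$C(0)<\infty$ almost surely,'' which I will do by contradiction.

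Next I would record the local structure of $C$. Orienting each edge from its left to its right endpoint, let $r_x$ and $\ell_x$ be the numbers of right- and left-edges at $x\in[\poi]$, so $r_x+\ell_x=D_x$. As $z$ increases past $x$, each right-edge of $x$ begins to cross $z$ and each left-edge ceases to, giving $C(z)-C(0)=\sum_{x\in[\poi]\cap(0,z]}(r_x-\ell_x)$. Two features of these increments are central. First, by mass transport each edge is a right-edge at its left endpoint and a left-edge at its right endpoint, so right- and left-edges have equal intensity and $\E^*[r_0]=\E^*[\ell_0]$; the increments are thus centered (zero drift). Second, $r_x-\ell_x\equiv D_x\pmod 2$, so at an odd-degree point the increment is odd, hence nonzero; since $\mu$ charges an odd integer, such points occur with positive intensity $p>0$, so the increments are genuinely non-degenerate and $C$ flips parity along a set of positive density.

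Assuming for contradiction that $C(0)<\infty$, the process $(C(z))_{z\in\RR}$ is an a.s.\ finite, stationary, ergodic, $\Z_{\ge 0}$-valued process whose increments telescope and have zero mean. Equivalently, viewing each edge as the interval $[x,y]$, $C$ is the coverage function of the edge-intervals, and a left-to-right scan behaves like the length of a critically loaded queue: passing $x$ closes $\ell_x$ of the currently open edges and opens $r_x$ new ones. I would then try to extract from finiteness a renewal structure: using the zero-drift recurrence of mean-zero ergodic sums together with $C\ge 0$ to locate gaps of $[\poi]$ across which no edge passes, i.e.\ cut points with $C=0$, occurring at positive density. Between consecutive cuts every stub is matched internally, so each block contains an even total number of stubs; by the positive density of odd-degree points the block parity is the binding constraint, and a translation-equivariant family of cuts yielding only even blocks amounts to an alternating $2$-colouring (a choice of ``phase'') of the consecutive points of $[\poi]$. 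No factor of an ergodic system can produce such a phase, since the two alternating colourings are interchanged by the shift while a factor must commute with it; this is the contradiction, forcing $C(0)=\infty$.

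The main obstacle is the passage, inside this last paragraph, from ``$C$ finite'' to the existence of genuine cut points at positive density. Zero drift and recurrence only force $C$ to return to its essential minimum $c$, and the real work is to show $c=0$ rather than some positive constant, i.e.\ to exclude a uniformly positive coverage with no cuts. This is precisely where the odd-degree hypothesis must be used quantitatively through the parity flips, not merely to guarantee non-degeneracy, and where the factor property must be invoked to forbid the ``coboundary/phase-locked'' coverings that a purely distributional argument (drift, parity of the marginal, correlation decay) is powerless to rule out.
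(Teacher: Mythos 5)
You have assembled the correct raw materials---the parity identity $r_x-\ell_x\equiv D_x\pmod 2$, the factor property, and an impossibility statement about translation-equivariant alternating labelings---but you stop just short of the one observation that makes the proof work, and instead take a detour that you yourself admit you cannot complete. Once $C(z)<\infty$ for every $z$ (which, as you note, follows from finiteness at the origin because increments are bounded by degree sums), the \emph{parity} of $C(z)$ is a well-defined $\{0,1\}$-valued function of $z$; by your own parity identity it flips exactly when $z$ passes an odd-degree point and is unchanged across even-degree points. This already assigns alternating $0/1$ labels to the intervals between consecutive odd-degree points, as a factor of the marked process---and the odd-degree points form a Poisson process (an independent thinning). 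That is the paper's entire argument; it requires no cut points, no renewal structure, no zero-drift or recurrence analysis, and no discussion of the essential minimum of $C$. Your route through ``cut points with $C=0$ at positive density'' is precisely the step you flag as the main obstacle, and it is an obstacle of your own making: the parity of $C$ does the job directly, whether or not $C$ ever vanishes.

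The second, independent gap is your closing contradiction. The claim that ``the two alternating colourings are interchanged by the shift while a factor must commute with it'' is not a valid argument for a Poisson process: a shift maps a Poisson configuration to a \emph{different} configuration, so equivariance imposes no constraint of this kind. That style of argument works only for configurations possessing an exact translation symmetry (for example a randomly shifted lattice, where translation by one period fixes the configuration but swaps the two colourings). For Poisson gaps, the impossibility of a factor alternating labeling is a genuine theorem---\cite[Lemma 11]{HPPS}---whose proof uses finer properties of the Poisson law, and the paper is careful to note that even this is not quite enough: one needs a strengthening of that lemma covering factors that may also use the randomness of the degree marks and of the positions of the even-degree points. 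Without this input, your argument would not close even if the cut-point step were granted.
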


\noindent If the number of edges that cross the origin if
infinite, then clearly also $N=\infty$. Hence, appealing to
Lemma \ref{le:NM}, Proposition \ref{prop:odd_deg} implies that
$\E^*[M]=\infty$ in any factor matching scheme for degree
distributions whose support contains an odd integer. Since the
stable multi-matching is a factor, combining Proposition
\ref{prop:odd_deg} with Theorem \ref{th:stable}(b) gives an
alternative proof (in $d=1$) of the result of \cite{DHH} that
the stable multi-matching does not percolate when the only
possible values for the degrees are 1 and 2 and the probability
of degree 1 is strictly positive. For degree distributions with
support on larger values this approach is inconclusive, since
Theorem \ref{th:stable}(b) does not apply.

The rest of the paper is organized as follows. In Section~\ref{prel}, a few
preliminary results are collected. The above results are then proved in
Section~\ref{proofs}.  In Section~\ref{MC-proof}, existence of a infinite
component in the degree 2 case in $d=1$ is shown to follow from the assertion
that a certain finite event has large enough probability, which is
convincingly supported by Monte-Carlo simulation. Section~\ref{further}
contains examples demonstrating that there is no general relation between the
edge length and the existence of an infinite component valid for any matching
scheme, and that for point processes other than the Poisson process, both
percolation and non-percolation are possible for the stable multi-matching in
the degree $2$ case. Finally, in Section~\ref{open} some directions for
further work are presented. For background on the problem we refer to
\cite[Section 2.1]{DHH} and \cite[Section 1]{D}.

\section{Preliminaries}\label{prel}

We first show that the iterative procedure described for the
stable multi-matching with random directions leads to the
unique stable multi-matching with the prescribed directions for
the edges.

\begin{prop}
Let $(\poi,\xi,\psi)$ be a doubly marked Poisson process.
Almost surely, the iterative multi-matching procedure described
in the introduction exhausts the set of stubs, and the limiting
graph (after an infinite number of iterations) is a random
direction stable multi-matching.  No other such matching scheme
exists.
\end{prop}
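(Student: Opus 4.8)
The plan is to treat existence (exhaustion of stubs together with stability) and uniqueness as separate tasks, with exhaustion as the main point. I regard the output of the iterative procedure as the union of all edges created over the (infinitely many) separation ranks, each edge being created at a finite stage. Since the scheme is a deterministic, translation-equivariant function of $(\poi,\xi,\psi)$, every auxiliary object I introduce below---in particular the point processes recording which points carry as-yet-unmatched right- or left-stubs---is translation-invariant, and I will freely use the ergodic theorem and the fact that a stationary point process of positive intensity a.s.\ has infinitely many points in every half-line.

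First I would prove exhaustion. Call a stub \emph{surviving} if it is never matched. The structural observation is that availability of stubs only decreases in time, so at the (finite) stage when a pair $(x,y)$ with $x<y$ is processed, any stub still surviving at the end of the procedure is in particular available then. Hence if $x$ carries a surviving right-stub, then every $y>x$ not joined to $x$ must have no available left-stub at that stage---otherwise the procedure would create the edge $(x,y)$---and therefore carries no surviving left-stub; as $x$ has at most $R_x$ neighbours, all but finitely many points to the right of $x$ carry no surviving left-stub. Consequently, a positive intensity of points with surviving left-stubs precludes any surviving right-stub, and symmetrically. A density count then closes the argument: writing $a,b$ for the intensities of surviving right- and left-stubs, each edge consumes one right-stub and one left-stub, so the matched intensities coincide, $\lambda\E^*[R_0]-a=\lambda\E^*[L_0]-b$; since $\E^*[R_0]=\E^*[L_0]=\tfrac12\E^*[D_0]$ this gives $a=b$. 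If $a>0$ then a.s.\ some point carries a surviving right-stub, forcing $b=0$ by the previous remark, whence $a=b=0$, a contradiction. Thus every stub is matched a.s.; in particular each $x$ receives exactly $R_x$ right-edges and $L_x$ left-edges, and the graph is simple.

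Next, stability. Suppose $x<y$ are not joined, yet $x$ has a right-edge longer than $|x-y|$ and $y$ a left-edge longer than $|x-y|$. Such a right-edge of $x$ reaches some $y'>y$ and such a left-edge of $y$ comes from some $x'<x$; since $(x,y)\subset(x,y')$ and $(x,y)\subset(x',y)$, the pairs $(x,y')$ and $(x',y)$ have strictly more intervening points than $(x,y)$ and so are processed at strictly later stages. Hence when $(x,y)$ is processed the stubs later used for those longer edges are still available, so the procedure joins $x$ and $y$---contradicting that they are unmatched. This shows the output is a random direction stable multi-matching. Uniqueness then follows from a minimal-counterexample argument using stability alone. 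Let $\mat,\mat'$ be two such matchings of the same marked process, and suppose they differ. Almost surely all interpoint distances are distinct, so among the edges lying in exactly one of $\mat,\mat'$ there is a unique shortest one, say $(x^*,y^*)$ with $x^*<y^*$, which we may assume lies in $\mat\setminus\mat'$. Comparing the equal-size sets of right-neighbours of $x^*$, since $y^*$ is an $\mat$-neighbour but not an $\mat'$-neighbour there is a point $y'$ that is an $\mat'$-neighbour but not an $\mat$-neighbour; the edge $(x^*,y')\in\mat'\setminus\mat$ is strictly longer by minimality, so $y'>y^*$, exhibiting a right-edge of $x^*$ in $\mat'$ longer than $|x^*-y^*|$. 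The symmetric comparison of left-neighbours of $y^*$ yields a left-edge of $y^*$ in $\mat'$ longer than $|x^*-y^*|$, so $x^*,y^*$ form a blocking pair in $\mat'$, contradicting its stability; hence $\mat=\mat'$.

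The main obstacle is the exhaustion step. A naive attempt to certify feasibility through the partial sums of $R_x-L_x$ fails: these sums are a mean-zero, hence oscillating, process, and their fluctuations are precisely what make the edges long (as reflected in Theorem~\ref{th:smrd}(ii)), so one cannot argue by a deterministic bracket-balance condition. The balance $\E^*[R_0]=\E^*[L_0]$ controls only intensities, and it must be combined with the greedy ordering---via the observation that a surviving right-stub starves all but finitely many left-stubs to its right---to rule out a positive density of permanently unmatched stubs. The stability and uniqueness arguments, by contrast, are routine adaptations of Gale--Shapley theory, the only twist being that ordering pairs by number of intervening points is compatible with the distance-based stability condition because nesting of intervals strictly increases that count.
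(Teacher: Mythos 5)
Your exhaustion and stability arguments follow essentially the same route as the paper: a stub that survives forever would, at the finite stage where the relevant pair is examined, have forced an edge -- this is exactly the paper's observation that the procedure, applied to the leftover stubs, would create edges that should already have been created. One caveat: your ``density count'' $\lambda\E^*[R_0]-a=\lambda\E^*[L_0]-b\ \Rightarrow\ a=b$ subtracts quantities that may be infinite, since $\mu$ is an arbitrary distribution on the positive integers and no finite-mean hypothesis appears in the statement. The paper instead gets the needed symmetry for free from reflection invariance: $x\mapsto -x$ preserves the law of $(\poi,\xi,\psi)$ while exchanging right- and left-stubs (Binomial$(D,1/2)$ and its complement have the same conditional law), and the construction is reflection-equivariant, so the processes of points carrying surviving right-stubs and surviving left-stubs have equal (finite) intensities; in particular $a>0$ iff $b>0$, which is all your argument needs. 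That repair is immediate, so this is a minor slip.

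The uniqueness argument, however, has a genuine gap. You assert that among the edges lying in exactly one of $\mat,\mat'$ ``there is a unique shortest one.'' Distinctness of interpoint distances gives uniqueness of a minimizer \emph{if one exists}, but existence is not automatic: the symmetric difference may contain infinitely many edges spread over all of $\R$, and their lengths can accumulate at the infimum without attaining it. What your blocking-pair computation actually proves is that no shortest differing edge exists -- which is perfectly compatible with $\mat\neq\mat'$. To close the argument one needs the ingredient used for exactly this purpose in \cite{HPPS} and \cite{DHH}: almost surely the Poisson process contains no infinite descending chain (no sequence of points with strictly decreasing consecutive distances). Your neighbour-set comparison shows that \emph{every} differing edge has a strictly shorter differing edge sharing an endpoint with it (if both replacement edges were longer, $(x^*,y^*)$ would block $\mat'$); iterating, and using that each point is incident to only finitely many differing edges, one extracts an infinite descending chain, a contradiction. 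The paper avoids descending chains altogether by a different device: it inducts over the stages of the algorithm, showing that each edge created at stage $k$ must be present in every stable matching with the prescribed directions (the inductive hypothesis accounts for the stubs consumed at earlier stages), and then concludes equality because both graphs give every $x$ exactly $R_x$ right-edges and $L_x$ left-edges. As written, your proof of the ``no other such matching scheme exists'' clause is incomplete.
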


\begin{proof}
Let $\poi'_r$ (respectively $\poi'_l$) be the process of points
with at least one unmatched right-stub (left-stub) on them
after the above matching procedure is completed. By symmetry
$\poi'_r$ and $\poi'_l$ have the same intensity and they are
both ergodic point processes. Hence either both have a.s.\
infinitely many points or both have a.s.\ no points. The first
option however would produce a contradiction, since the
iterative procedure could then be applied to the remaining
configuration of stubs giving rise to edges that would have
been created already in the original procedure.

That the resulting multi-matching is stable subject to the
prescribed (random) directions follows from the definition: an unstable pair of
points -- that is, a pair $x$ and $y$ with $x<y$ with no edge
between them and where $x$ ($y$) has an edge connected to the
right (left) of $y$ ($x$) -- would have had an edge created
between them at some stage of the matching procedure. That it
is the unique matching with this property follows by induction
over the stages in the algorithm to show that each edge that is
present in the resulting configuration must be present in any
stable matching of the stubs with the prescribed directions.
\end{proof}

We proceed by formulating a version of the mass transport
principle suitable for our needs. For background, see
\cite{BLPS}. A {\bf mass transport} is a random measure $T$ on
$(\RR^d)^2$ that is invariant in law under translations of
$\RR^d$. For Borel sets $A,B\subset\RR^d$, we interpret
$T(A,B)$ as the amount of mass transported from $A$ to $B$.
Write $Q$ for the unit cube $[0,1)^d$.

\begin{lemma}[Mass Transport Principle]\label{le:mt}
Let $T$ be a mass transport. Then
\[
\E\, T(Q,\RR^d) =\E\, T(\RR^d,Q) \, .
\]
\end{lemma}

\begin{proof}
\[
\E\, T(Q,\RR^d)  =  \sum_{z\in\Z^d}\E\, T(Q,Q+z)
 =   \sum_{z\in\Z^d}\E\, T(Q-z,Q) = \E\, T(\RR^d,Q) \, .
\qedhere
\]
\end{proof}

Lemma \ref{le:NM} is now easily established using the mass transport principle.

\begin{proof}[Proof of Lemma \ref{le:NM}]
Consider the mass transport in which each point $x\in[\poi]$ sends out mass
$2M_x$, and distributes it uniformly to the interval $(x-M_x,x+M_x)$. The
expected mass sent out from the unit interval $[0,1)$ equals $2 \,\E^*[M]$.  On the other hand, the mass received by the unit interval is $\int_0^1 N_x \,dx$, where $N_x$ denotes the number of points that desire $x\in\RR$.  Hence the expected mass received by the unit interval is $\E N$.  The result hence follows from the mass
transport principle.
\end{proof}

Next we observe that an infinite component in a
translation-invariant matching scheme must be unbounded both to
the right and to the left, that is, for any $r\in\RR^+$ it must
contain points both to the right of $r$ and to the left of
$-r$.

\begin{lemma}\label{le:both_directions}
A translation-invariant matching scheme almost surely cannot
have an infinite component that is unbounded in only one
direction.
\end{lemma}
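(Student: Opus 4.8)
The plan is to argue by contradiction using the Mass Transport Principle (Lemma \ref{le:mt}). First I note that since $[\poi]$ is locally finite, any infinite component is necessarily unbounded, so the content of the statement is that such a component cannot be bounded on one side; equivalently, being unbounded in only one direction means being unbounded to the right (above) but bounded to the left (below), or its mirror image. It therefore suffices to rule out the first possibility, since an identical argument transporting to the rightmost vertex handles the second. So suppose, for contradiction, that with positive probability there exists an infinite component that is unbounded above but bounded below. A component bounded below has a well-defined \emph{leftmost} vertex (the minimum exists because the points below any level are finite in number).

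The key construction is the following mass transport $T$: each point $x\in[\poi]$ that belongs to an infinite component which is unbounded above but bounded below sends unit mass to the leftmost vertex of its own component, and every other point sends no mass. Because the matching scheme $\mat$ is translation-invariant and the rule (``identify your component, then its leftmost vertex'') is equivariant under translations, $T$ is a legitimate translation-invariant mass transport. I would then compute the two sides of Lemma \ref{le:mt}. The mass leaving $Q=[0,1)$ is at most the number of Poisson points in $Q$, so $\E\,T(Q,\RR)\le 1<\infty$. On the other hand, a leftmost vertex $\ell$ of such a component receives unit mass from every one of the infinitely many vertices of that component, so it receives infinite mass.

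To conclude I would show the right-hand side $\E\,T(\RR,Q)$ is infinite. Let $\mathcal{L}$ be the (translation-invariant, factor) point process of leftmost vertices of the relevant one-sided components. By the contradiction hypothesis $\mathcal{L}$ is nonempty with positive probability, and a translation-invariant point process that is nonempty with positive probability has strictly positive intensity: if $\E\,\mathcal{L}(Q)=0$ then $\mathcal{L}(Q)=0$ a.s., and covering $\RR$ by integer translates of $Q$ forces $\mathcal{L}(\RR)=0$ a.s., a contradiction. Hence $\PP(\mathcal{L}(Q)\ge 1)>0$, and since each point of $\mathcal{L}\cap Q$ receives infinite mass we get $\E\,T(\RR,Q)=\infty$. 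This contradicts $\E\,T(Q,\RR)=\E\,T(\RR,Q)$ from Lemma \ref{le:mt}, completing the proof.

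The main obstacle I anticipate is not any single hard estimate but rather the bookkeeping required to make the transport rigorous: one must check that $T$ is genuinely a translation-invariant random measure (in particular that ``leftmost vertex of the component'' is a measurable, translation-equivariant function of $(\poi,\xi,\mat)$), and one must handle the step from ``positive probability of existence'' to ``positive intensity of $\mathcal{L}$'' cleanly, as sketched above. Once those points are in place the quantitative contradiction (finite out-mass versus infinite in-mass) is immediate.
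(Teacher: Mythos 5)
Your proof is correct and is essentially the same as the paper's: the paper also uses the mass transport in which every vertex of a one-sided-unbounded infinite component sends unit mass to its component's extreme point, then contradicts the Mass Transport Principle via finite out-mass (at most the Poisson intensity) versus infinite in-mass. Your extra care in verifying that the extreme point lies in $Q$ with positive probability (via the positive-intensity argument for the process $\mathcal{L}$) is a detail the paper states without elaboration, but the underlying argument is identical.
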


\begin{proof}
Assume that there is a matching scheme that with positive
probability gives rise to an infinite component that is
unbounded in only one direction, say to the left, and consider
the mass transport in which each vertex in such an infinite
component sends mass 1 to the rightmost point in the component.
With positive probability such a rightmost point is located in
the unit interval, which then receives infinite mass. The
expected mass sent out from the unit interval is however
bounded by 1, so we have a contradiction with the mass
transport principle.
\end{proof}

Finally, the following result will be of use in proving Theorem
\ref{th:stable}(b).

\begin{lemma}\label{lemma:next_one}
Let $\Gamma$ be a translation-invariant simple point process of
finite intensity on $\RR$.  For $x\in[\Gamma]$, write $Z_x$ for
the maximum of the distances from $x$ to the nearest point of
$[\Gamma]$ on the left and the nearest point on the right.  The
number of points $x\in[\Gamma]$ with $Z_x>|x|$ is finite almost
surely.
\end{lemma}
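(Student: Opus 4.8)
The plan is to prove the quantitatively stronger statement that the \emph{expected} number of such points is finite; since this count is a nonnegative integer-valued random variable, a finite mean forces it to be finite almost surely. Write $\lambda\in(0,\infty)$ for the intensity of $\Gamma$ (the intensity-zero case being trivial), and for $x\in[\Gamma]$ let $L_x$ and $R_x$ be the distances from $x$ to its nearest neighbours on the left and right, so that $Z_x=\max(L_x,R_x)$. Let $\PP^0,\E^0$ denote the Palm probability and expectation of $\Gamma$ (under which there is a point of $\Gamma$ at the origin), with $L_0,R_0,Z_0$ the corresponding quantities at that point.

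First I would integrate out the position using translation invariance. The subtlety is that the event $\{Z_x>|x|\}$ is not itself translation invariant, since it refers to the fixed origin through $|x|$; but the local quantity $Z_x$ is, and the Campbell--Mecke formula converts the global count into an integral over position of the Palm law of $Z_0$:
\[
\E\,\#\{x\in[\Gamma]:Z_x>|x|\}
=\E\sum_{x\in[\Gamma]}\mathbf 1\{Z_x>|x|\}
=\lambda\int_{\RR}\PP^0\!\left(Z_0>|t|\right)dt
=2\lambda\int_0^\infty\PP^0(Z_0>t)\,dt
=2\lambda\,\E^0[Z_0].
\]
So everything reduces to the integrability statement $\E^0[Z_0]<\infty$.

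It then remains to bound $\E^0[Z_0]\le\E^0[L_0]+\E^0[R_0]$ and to show $\E^0[R_0]=\E^0[L_0]=1/\lambda$. For the right-neighbour distance I would invoke the mass transport principle (Lemma~\ref{le:mt}): let each point of $[\Gamma]$ spread unit-density mass over the open gap separating it from its right-neighbour. The mass leaving the unit interval $Q$ is then $\sum_{x\in[\Gamma]\cap Q}R_x$, with expectation $\lambda\,\E^0[R_0]$, while the mass entering $Q$ equals the Lebesgue measure of $Q$ (the gaps tile $\RR$ up to the null set $[\Gamma]$), namely $1$. Lemma~\ref{le:mt} equates these, giving $\E^0[R_0]=1/\lambda$, and the mirror-image transport gives $\E^0[L_0]=1/\lambda$. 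Hence the expected count is at most $2\lambda\cdot(2/\lambda)=4<\infty$, which completes the proof.

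The only place the finite-intensity hypothesis enters, and the step I expect to be the real content, is this integrability of the nearest-neighbour distance under the Palm measure: a stationary process may have heavy-tailed gaps, so $\E^0[Z_0]<\infty$ is not visible from the pointwise structure and must be extracted from the intensity, which the mass-transport identity does cleanly. As a check on where an infinite contribution could possibly originate, note that at most one point $x>0$ can have $L_x>x$ (such an $x$ would have no point of $\Gamma$ in $[0,x)$, and two of them would contradict one another), and symmetrically at most one point $x<0$ has $R_x>|x|$; thus all the unboundedly many potential contributions come from long right-gaps at positive points and long left-gaps at negative points, which are precisely the quantities controlled by $\E^0[R_0]$ and $\E^0[L_0]$.
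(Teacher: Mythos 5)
Your proof is correct, and it takes a genuinely different route from the paper's. The paper's argument uses a single mass transport and no Palm calculus at all: each gap $(x,y)$ between consecutive points of $[\Gamma]$ sends out mass $y-x$, spread uniformly over the three-times-longer interval $(x-(y-x),\,y+(y-x))$; infinitely many points with $Z_x>|x|$ would force the unit interval to receive infinite mass while it sends out mass at most $1$, contradicting Lemma~\ref{le:mt}. You instead split the work in two steps: the Campbell--Mecke formula converts the global count into $2\lambda\,\E^0[Z_0]$, and a mass-transport (equivalently, Palm-inversion) identity bounds $\E^0[L_0]$ and $\E^0[R_0]$ by $1/\lambda$. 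What your route buys is a strictly stronger, quantitative conclusion: the \emph{expected} number of points with $Z_x>|x|$ is at most $4$ for every stationary process of finite intensity (and equals $3$ for the unit-intensity Poisson process), whereas the paper's contradiction argument yields only almost-sure finiteness; the cost is an appeal to the refined Campbell theorem, which the paper's self-contained transport avoids. One pedantic caveat: a stationary process of positive intensity can still be empty with positive probability, in which case the mass received by the unit interval in your gap transport is $0$; so the correct identity is $\E^0[R_0]=\PP(\Gamma\neq\emptyset)/\lambda\le 1/\lambda$ (and the claim that the gaps tile $\RR$ uses the standard fact that a nonempty stationary point process is a.s. unbounded in both directions). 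Since you only need the upper bound, both points are harmless and your proof stands as written.
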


\begin{proof}
Consider the mass transport in which an interval sends out mass equal to its
length, and the mass sent out by an interval $(x,y)$ between consecutive
points $x<y$ of $[\Gamma]$ is distributed uniformly to the interval
$(x-(y-x),y+(y-x))$. If there were infinitely many points $x\in[\Gamma]$ with
$Z_x>|x|$, then the unit interval would receive infinite mass, which
conflicts with the mass transport principle, since the mass sent out from the
unit interval equals 1.
\end{proof}

\section{Proofs}\label{proofs}

We now proceed to prove the results in Section \ref{results},
starting with the uniqueness of an infinite component. We say
that two edges $(a,b)$ and $(c,d)$ in $[\mat]$ {\bf cross} each
other if $a<c<b<d$.

\begin{proof}[Proof of Proposition \ref{prop:unik}]
Observe that both matching schemes have the property that,
\begin{equation}\label{eq:cross_conn}
\begin{array}{l}
\mbox{if two edges $(a,b)$ and $(c,d)$ cross each other, then}\\
\mbox{the edge $(c,b)$ must also be present in the matching.}
\end{array}
\end{equation}
-- this follows from the definitions of the matching schemes. Two distinct
components hence cannot have crossing edges. However, by Lemma
\ref{le:both_directions}, any infinite component must be
unbounded in both directions.  Hence two distinct infinite
components would necessarily have crossing edges.
\end{proof}

\begin{proof}[Proof of Theorem \ref{th:smrd} (i)]
Recall that $L_x$ (respectively $R_x$) is the number of edges incident with
$x\in[\poi]$ that are connected to the left (right) of $x$. Let
$\mu(\{2\})=1$.  We call $x$ a {\bf bird} if $L_x=R_x=1$, a {\bf left-beak}
if $L_x=2$ and a {\bf right-beak} if $R_x=2$ (see Figure 1). Let
$(\cdots<)x_1<\cdots<x_k(<\cdots)\in[\poi]$ be the ordered vertices of some
(finite or infinite) component of the stable multi-matching (recall by
Lemma~\ref{le:both_directions} that a component is either finite or unbounded
in both directions). Clearly, if the component is finite, then its leftmost
point is a right-beak and its rightmost point is a left-beak. We claim that
if $x_i$ is a right-beak, and not one of these extreme points of the
component, then $x_{i+1}$ is a left-beak. To check this, let the two edges
from $x_i$ have their other endpoints at $x_j$ and $x_k$, where
$x_i<x_j<x_k$.  We claim that the other neighbour of $x_j$ must lie left of
$x_i$.  To see this, follow the path formed by the cluster starting with the
edge $(x_i,x_j)$ - eventually we must leave the interval $[x_i,x_k]$, since
the cluster contains points to the left of $x_i$.  When we do so, it is via
an edge that crosses $(x_i,x_k)$.  Unless it is the first edge encountered
after $x_j$, this entails a violation of (\ref{eq:cross_conn}).  Thus $x_j$
is a left beak.  Now there cannot be any further vertices of the cluster in
the interval $[x_i,x_j]$, since such a vertex would have an incident edge
crossing $(x_i,x_j)$, again contradicting (\ref{eq:cross_conn}).

Therefore, the non-extreme vertices of a component consist of birds together
with consecutive right-beak/left-beak pairs. Note that between the points of
a component there may be points belonging to other components, but since two
components cannot have crossing edges, any other such component must lie in a
single interval $(x_i,x_{i+1})$.

Now consider the function $F:\RR\to\Z$ defined by $F(0)=0$, and
$$F(y)-F(x)=\sum_{t\in [\poi]\cap [x,y)} (L_t-R_t), \quad x<y.$$
Thus, $F$ takes a up-step (of size 2) at each left-beak and a down-step at
each right-beak. Hence it is a continuous-time simple symmetric random walk
on the even integers.  On the other hand, by the observations above
concerning components, if there is an infinite component, then $F$ is bounded
above a.s.\ by some (random) constant, which is impossible.
\end{proof}

\begin{proof}[Proof of Theorem \ref{th:smrd} (ii)]
We use a variant of an argument from \cite[Theorem~2(b)]{HPPS}. For $A\subset
\RR$, write $R(A)$ for the total number of right-stubs at points
$x\in[\poi]\cap A$, that is, $R(A)=\sum_{x\in[\poi]\cap A}R_x$, and define
$L(A)$ analogously as the total number of left-stubs in $A$. Furthermore, for
$A,B\subset \RR$, let $R(A\to B)$ denote the number of right-stubs in $A$
that are matched with left-stubs in $B$, and $D(A\leftrightarrow B)$ the
total number of edges connecting points in $[\poi]\cap A$ to points in
$[\poi]\cap B$. Write $k$ for the supremum of the support of $\mu$. For
$t>0$, we have
\begin{eqnarray*}
\E R\big([0,2t]\to[0,2t]^c\big) & = & \frac{1}{2}\,\E D\big([0,2t]\leftrightarrow[0,2t]^c\big)\\
& \leq & \frac{k}{2}\,\int_0^{2t}\PP^*\big(X>x\wedge (2t-x)\big)\;dx\\
& = & k\cdot \E^*[X\wedge t].
\end{eqnarray*}
Furthermore, since $\mu$ has bounded support, we can use the central limit theorem
to get that
$$
\E R\big([0,2t]\to[0,2t]^c\big) \geq \E\Big[\big(R[0,2t]-L[0,2t]\big)^+ \Big]\sim ct^{1/2}
$$
as $t\to\infty$ for some $c>0$. Hence $t^{-1/2}\E^*[X\wedge
t]\geq c'$ for sufficiently large $t$ and some $c'>0$. On the
other hand, if $\E^*[X^{1/2}]<\infty$, then
$t^{-1/2}\E^*[X\wedge t]\to 0$ as $t\to \infty$ by the
dominated convergence theorem, a contradiction.
\end{proof}

\begin{proof}[Proof of Theorem \ref{th:stable} (i)] Let
$$
H=\{x\in[\poi]: M_x>|x|-1\},
$$
that is, $H$ is the set of vertices that desire some point in the unit
interval $(-1,1)$.  Write $\widetilde{N}$ for the cardinality of $H$.  We
will show that $\widetilde{N}=\infty$ a.s.  By symmetry this implies that
with positive probability infinitely many vertices in $(1,\infty)$ desire
$1$.  However, on the latter event, for any $a>1$, infinitely many vertices
in $(a,\infty)$ desire $a$, so by ergodicity it follows that $N=\infty$ a.s.

First we show that $\PP(\widetilde{N}=0)=0$. Assume for contradiction that
$\PP(\widetilde{N}=0)>0$.  For a configuration $(\poi,\xi)$ with
$\tilde{N}=0$, consider a modified configuration where a vertex is added
uniformly at random in $[0,1]$ independently of $\poi$. If follows from
\cite[Lemma 18]{HPPS} and a straightforward modification of \cite[Lemma
16]{HPPS} that all stubs at this vertex would be unmatched in the stable
multi-matching, which contradicts \cite[Proposition 2.2]{DHH}.

Now assume that all components are finite a.s., and suppose that for a
contradiction that $\PP(\widetilde{N}<\infty)>0$. For a configuration
$(\poi,\xi)$ with $\widetilde{N}<\infty$, consider a modified configuration
where the vertices in $H$ are removed, along with all vertices in their
components. The number of vertices that are removed is almost surely finite.
But in this configuration, we have $\widetilde{N}=0$, which is a
contradiction to a straightforward modification of \cite[Lemma~18]{HPPS}.
\end{proof}

\begin{proof}[Proof of Theorem \ref{th:stable} (ii)]
First note that, when the only possible values for the degrees
are 1 and 2, the stable multi-matching cannot contain any
crossing edges.  If $a<c<b<d$ and the edges $(a,b)$ and $(c,d)$
are present, then, as pointed out in the proof of
Proposition~\ref{prop:unik}, the edge $(c,b)$ must also be
present in the matching.  But if $b-a>d-b$, then $b$ and $d$
desire each other, and are hence connected by an edge, so $b$
has degree at least $3$.  Similarly, if $b-a<d-b$ then $c$ has
degree at least $3$.

Lemma \ref{le:both_directions} and the fact that edges do not cross imply
that an infinite component must consist of a set of degree-2 vertices,
unbounded in both directions, with an edge between each consecutive pair.  It
follows from Lemma~\ref{lemma:next_one} that the number of vertices in this
infinite component that desire the origin is finite almost surely.

As for the finite components, each must be contained in a
single interval defined by an edge of the infinite component
(since there are no crossing edges).  Note also (although this
observation will not be needed) that a component of size $k$
must consist of vertices $x_1<\cdots<x_k$ with edges
$(x_i,x_{i+1})$ for all $i=1,\ldots, k-1$ together with the
edge $(x_1,x_k)$.

Now let $I_0$ denote the interval defined by the edge in the
infinite path that crosses the origin. This interval is finite
and hence contains almost surely finitely many points of
$[\poi]$ in finite components. These points might desire the
origin. A vertex $x>0$ (respectively, $x<0$) in a finite
component outside this interval however cannot desire the
origin: if it did, it would also desire the left-most
(right-most) end-point of the interval $I_x$ defined
analogously to $I_0$. But this vertex also desires $x$, which
means that there would be an edge between them.

We conclude that $N<\infty$ almost surely, as desired.
\end{proof}

\begin{proof}[Proof of Proposition \ref{prop:M_bound}]
Say that a point $x\in[\poi]$ is {\bf $t$-bad} if $M_x>t$. If
$D\leq k$ almost surely, then there can be at most $k$ $t$-bad
points in the ball $B(0,t/2)$. Hence
$$
k\geq \E[\mbox{number of $t$-bad points in }B(0,t/2)\}]= \mbox{vol}(B(0,t/2))\,\PP^*(M>t),
$$
giving the result.
\end{proof}

\begin{proof}[Proof of Proposition \ref{prop:odd_deg}]
Assume that the number of edges that cross the origin is finite with positive
probability. On the event that the origin is crossed by finitely many edges,
the same is true for any other $x\in\RR$, since the difference between the
number of edges crossing $x$ and the number of edges crossing the origin is
bounded above by the total degree of the vertices between the origin and $x$.
Now consider the intervals between the points $x\in[\poi]$ with odd degrees.
When passing a point with odd degree, the number of crossing edges changes
parity, that is, if it is even (odd) immediately to the left of the point, it
is odd (even) to the right. When passing a point with even degree on the
other hand, the parity of the number of crossing edges remains unchanged.
This means that we can assign the value 0 (even number of edges crossing) or
1 (odd number of edges crossing) to the intervals separating the odd degree
vertices in a deterministic way (indeed, the stable multi-matching is a
factor). Furthermore, the odd degree vertices constitute a Poisson process.
Now, \cite[Lemma 11]{HPPS} asserts that it is impossible to assign
alternating values 0 and 1 to the intervals separating the points of a
Poisson process as a factor of the Poisson process. Here we need the stronger
statement that this cannot be done even using the randomness in the degrees
of the vertices and in the position of the even degree vertices. This however
follows from a straightforward modification of the proof of \cite[Lemma
11]{HPPS}.
\end{proof}

\section{Percolation for the stable multi-matching with $D\equiv 2$}\label{MC-proof}


If the stable multi-matching almost surely has an infinite component, then
there is a strictly positive probability $p$ that a given vertex belongs to
this component. Simulations of the stable multi-matching with $D\equiv 2$ on
large finite cycles indicate a largest component comprising about $0.3$ of
the vertices (see the top row of Table~\ref{sim} in Section~\ref{open}
below). This suggests the existence of an infinite component with $p\approx
0.3$. In this section we show that percolation indeed follows from the
assumption that a certain finite event has sufficiently large probability.
Furthermore, we give overwhelming statistical evidence for this assumption.

The key concept for the proof is the \df{core (stable) multi-matching}, which
we define next (in the more general setting of arbitrary dimension and
numbers of stubs). Let $S\subset\R^d$ be a bounded set, let $P\subset S$ be a
finite set of points, and let $(D_x)_{x\in P}$ be positive integers
representing numbers of stubs.  Let $\widetilde P=P\cup\{S^C\}$, where
$S^C:=\R^d\setminus S$.  (We will treat $S^C$ like an additional point; it
will not form part of the matching, but will affect the notion of closest
points.) Assume that all distances between pairs of elements of $\widetilde
P$ are distinct. Assign $D_x$ stubs to each point $x\in P$, and one stub to
$S^C$.  Repeat the following operations. From each point $x\in P$ that
currently has an unused stub, assign an arrow pointing to the closest other
element of $\widetilde P$ among those that have at least one unused stub and
do not already have an edge to $x$. Then, for every pair $x,y\in P$ whose
arrows point to each other, connect them with an edge and remove one stub
from each.  Erase all arrows and repeat.  After some finite number of such
iterations, no more edges are added. The core multi-matching of $(P,D)$ in
$S$ defined to be the resulting graph. Note that the degree of a vertex $x\in
P$ is at most $D_x$, but may be strictly less.
\begin{lemma}
Let $P$ be any discrete set of points in $\R^d$, let
$(D_x)_{x\in P}$ be positive integers, and let $S$ be a bounded
set.  Every edge in the core multi-matching of $(P\cap S,D)$ in
$S$ is present in every stable multi-matching of $(P,D)$ on
$\R^d$.
\end{lemma}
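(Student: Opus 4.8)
The plan is to prove this by induction on the order in which edges are added in the core multi-matching construction, showing that each such edge must appear in any stable multi-matching of $(P,D)$ on all of $\R^d$. The central idea is that the construction of the core multi-matching mimics the iterative construction of the stable multi-matching described earlier (via mutually-closest compatible pairs), except that the auxiliary point $S^C$ accounts for the possibility that a point of $P\cap S$ might prefer to match with something outside $S$. So the key invariant I would maintain is that every edge the core construction commits to is ``safe'': at the moment it is formed, both of its endpoints genuinely prefer each other over all compatible partners in the full process on $\R^d$, not merely over partners inside $S$.

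First I would set up the induction carefully. Fix a stable multi-matching $\mat$ of $(P,D)$ on $\R^d$; by the characterization given in the introduction, $\mat$ has the property that for any two points $x,y\in P$, either they are joined by an edge of $\mat$, or at least one of them has no incident $\mat$-edge longer than $|x-y|$. Now consider the core construction and let $e_1,e_2,\dots$ be the edges it produces, listed in the order (across successive iterations) in which they are added. The inductive hypothesis at step $m$ is that $e_1,\dots,e_{m-1}$ are all present in $\mat$. I would then argue that $e_m=(x,y)$ must also lie in $\mat$. The reason is that in the core construction $x$ and $y$ pointed arrows at each other, meaning $y$ is the closest element of $\widetilde P$ to $x$ among those still having a free stub and not already joined to $x$, and symmetrically for $x$. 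The crucial point is that the stubs already consumed at $x$ and $y$ correspond exactly to edges $e_1,\dots,e_{m-1}$, which by induction are edges of $\mat$ as well; hence in $\mat$ the points $x$ and $y$ have used up precisely the same stubs, so each still has a free stub available to the other.

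The heart of the argument is ruling out that $x$ prefers some point $z\in P$ (possibly outside $S$) with $|x-z|<|x-y|$. Here the role of $S^C$ becomes decisive. If such a closer $z$ existed and lay inside $S$, then in the core construction $z$ would have been a candidate strictly closer than $y$, and I would show $z$'s relevant stub must already be exhausted by an earlier edge (hence an $\mat$-edge) or that $z$ already has an edge to $x$, so that stability forces the $(x,y)$ edge just as in the core step. If instead the closer point $z$ lies in $S^C$, then the single stub assigned to $S^C$ guarantees that $x$'s arrow would have pointed at $S^C$ rather than at $y$ unless $|x-y|<|x-S^C|=\mathrm{dist}(x,S^C)$; thus $e_m$ being formed certifies that $y$ is closer to $x$ than anything outside $S$, so no external partner can block the edge. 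Combining both cases with the stability property of $\mat$ and the fact that $x,y$ each retain a free stub for the other shows that $\mat$ cannot leave $x$ and $y$ unmatched, i.e.\ $(x,y)\in\mat$, completing the induction.

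The main obstacle I anticipate is handling the bookkeeping of stubs and the $S^C$ surrogate simultaneously: I must verify that the number of stubs consumed at each vertex in the core construction never exceeds what is consumed at the same vertex in $\mat$, and that the $S^C$ point's single stub correctly encodes ``any match leaving $S$ is farther than the current candidate.'' In particular, because $S^C$ carries only one stub, once it has been matched once in the core process it drops out as a candidate; I need to check this does not wrongly permit a core edge when in reality $x$ would prefer a far-away external partner. The resolution is that $S^C$ need never actually form an edge for the argument—its presence only ever \emph{suppresses} a core edge $(x,y)$ when something external is closer, so whenever the core does commit to $(x,y)$, the external-preference obstruction is already excluded, which is exactly what stability of $\mat$ requires.
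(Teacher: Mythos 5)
Your proposal is correct and follows essentially the same route as the paper's (very brief) proof: induction on the steps of the core algorithm, with the decisive observation that an arrow pointing at $y$ rather than at $S^C$ certifies that $y$ is closer to $x$ than every point of $P\setminus S$. The one phrase to tighten is that in $\mat$ the endpoints ``still have a free stub available'' --- in a stable multi-matching all stubs are used, so this should be read as: each endpoint has at least one $\mat$-edge not among the earlier core edges, and your case analysis then shows any such edge must be longer than $|x-y|$, which is exactly what contradicts stability when $(x,y)\notin\mat$.
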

\begin{proof}
It is straightforward to check by induction on the steps of the
above algorithm that every edge added is present in any stable
multi-matching. The key point is that if $x\in S$ is closer to
some other point in $y\in S$ than to $S^C$, then $x$ is also
closer to $y$ than to any point in $P\setminus S$.
\end{proof}
We now specialize to the main case of interest.  Let $d=1$, and let
$S=I=[a,b]$, a bounded interval.  Let $\poi$ be a Poisson process of
intensity $1$ on $\R$, and consider the case $\mu(\{2\})=1$ of
deterministically two stubs per vertex.  By the core multi-matching on the
interval $I$ we mean the core multi-matching of $([\poi]\cap I,D)$ on $I$,
where $D\equiv 2$. We call an interval $I=[a,b]$ \df{good} if the core
matching on $I$ has a connected component with a point in the first quarter
$[a,\tfrac34a+\tfrac14b]$ and a point in the last quarter
$[\tfrac14a+\tfrac34b,b]$.

\begin{theorem}\label{conditional}
Let $\poi$ be a Poisson process of intensity 1 on $\R$ and let
$\mu(\{2\})=1$.  If for some $L$ we have $\PP([0,L] \text{ is good})>0.968$,
then the 2-stub stable multi-matching has an infinite component.
\end{theorem}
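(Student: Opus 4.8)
I would begin by recording the geometry forced by the matching. In the two-stub stable multi-matching no two edges cross: a crossing pair $a<c<b<d$ would, via property~(\ref{eq:cross_conn}), force the edge $(c,b)$ and hence a vertex of degree at least $3$, exactly as in the proof of Theorem~\ref{th:stable}(ii). By the core-matching lemma, every edge of the core matching on $[a,b]$ is an edge of the full stable multi-matching; so if $[a,b]$ is good then the full matching contains a \emph{single} component $C$ with a point in $[a,\tfrac34a+\tfrac14b]$ and a point in $[\tfrac14a+\tfrac34b,b]$, and, being connected and non-crossing, $C$ has an edge crossing every point of the middle half $[\tfrac14a+\tfrac34b,\,\tfrac34a+\tfrac14b]$. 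I would then prove the key \emph{linking lemma}: in a non-crossing matching, if two distinct components contain points $\alpha_1<\alpha_2<\beta_1<\beta_2$ with $\alpha_1,\beta_1$ in one and $\alpha_2,\beta_2$ in the other, a contradiction results. Indeed, realize the edges as interior-disjoint arcs in the upper half-plane; the path joining $\alpha_1$ to $\beta_1$ and the path joining $\alpha_2$ to $\beta_2$ have interleaved endpoints on the boundary line, so by the Jordan curve theorem they must intersect, yet distinct components share no vertices. Hence interleaved distinguished points lie in one component. Taking overlapping translates $I_k=[c+k\Delta,\,c+k\Delta+L]$ with offset $\Delta$ in a suitable band (a short computation gives $L/4<\Delta<L/2$ for consecutive blocks), goodness of the relevant neighbors forces their spanning components to coincide.

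\textbf{Reduction to uncrossed points.} I would reformulate percolation as follows. Call $z\in\RR$ \emph{uncrossed} if no edge $(x,y)\in[\mat]$ has $x<z<y$. By Lemma~\ref{le:both_directions} an infinite component is unbounded in both directions and therefore crosses every point outside it; conversely, the laminar (nested-or-disjoint) structure forced by non-crossing, together with a mass-transport argument, shows that the absence of an infinite component produces a positive density of uncrossed points. Thus an infinite component exists if and only if almost surely there is no uncrossed point. Since a good interval certifies that none of its middle half is uncrossed, the theorem reduces to: if $\PP([0,L]\text{ good})$ is large, then almost surely no point is uncrossed.

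\textbf{The percolation step.} Here I would renormalize. Because the core matching on an interval is a function only of the points inside it (with the complement treated as one extra point), the events $\{I_k\text{ good}\}$ have finite range of dependence. I would arrange the blocks so that goodness of a prescribed neighboring pattern forces, through the spanning property and the linking lemma, that the corresponding components agree and jointly cross a growing region; an infinite cluster of good blocks then yields an infinite component, equivalently kills uncrossed points. I would invoke a stochastic-domination theorem for finitely dependent fields (Liggett--Schonmann--Stacey) to dominate the good field from below by an i.i.d.\ Bernoulli field, choosing the constant so that $\PP(\text{good})>0.968$ pushes the Bernoulli parameter above the critical value of the comparison percolation; ergodicity then upgrades positive probability of an infinite cluster to almost-sure existence.

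\textbf{The main obstacle.} The difficulty is the percolation step, not the geometry. Short-range linking in a single spatial dimension cannot on its own sustain an infinite component against a positive density of bad blocks, so the renormalized comparison must be set up on a genuinely two-dimensional or oriented lattice---plausibly using the nesting/scale structure of the arcs as a second coordinate---so that the dominated model is actually supercritical. Verifying that the geometric links realize the bonds of such a lattice, and tracking the finite-dependence constant through the domination so that the threshold emerges as precisely $0.968$, is the delicate part; I expect the deterministic half (spanning, laminarity, and the planarity-based linking lemma) to be routine by comparison.
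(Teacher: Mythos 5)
Your deterministic groundwork (non-crossing of edges via (\ref{eq:cross_conn}), the core-matching lemma, and the planarity argument that two components with interleaved points must coincide) is sound, but the proof has a genuine gap exactly where you locate it: the percolation step. Your linking lemma only merges components whose spanning paths \emph{overlap} (interleaved endpoints). Since $\PP(\text{good})<1$, any fixed-scale chain of translates of $[0,L]$ almost surely contains infinitely many bad blocks, and at a bad block there is no overlap to exploit; planarity gives nothing across a gap. No choice of offset $\Delta$, and no appeal to Liggett--Schonmann--Stacey domination, can repair this: as you yourself concede, one-dimensional finite-range percolation has no supercritical phase, and the ``two-dimensional or oriented lattice using the nesting/scale structure'' is never constructed. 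Likewise the constant $0.968$ cannot be recovered by ``tracking the finite-dependence constant through the domination''; in the paper it has a completely different and explicit origin.

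The paper supplies the two ideas your outline is missing. First, a \emph{stability-based} merging lemma (Lemma~\ref{combine}), strictly stronger than your planarity lemma: if monotone paths span $[a,b]$ and $[c,d]$ and both of these intervals are longer than the gap $[b,c]$, then the two paths lie in a single monotone path from $a$ to $d$ --- otherwise the two extremal vertices facing the gap would each have their other edge leaving $(a,d)$, making them an unstable pair. This is what allows good regions to be joined \emph{across} a bad region, which interleaving alone can never do. Second, a multi-scale recursion in place of fixed-scale block domination: setting $I^k=[-9^kL/2,\,9^kL/2]$, Corollary~\ref{goodness} (built on Lemma~\ref{combine}) shows that if at least $8$ of the $9$ ninths of $I^{k+1}$ are good then $I^{k+1}$ is good, whence $p_{k+1}\geq f(p_k)$ with $f(p)=p^9+9p^8(1-p)$. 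The threshold $0.968$ is precisely what makes this iteration satisfy $\sum_k(1-p_k)<\infty$, so Borel--Cantelli gives that all sufficiently large scales are good with the $8$-of-$9$ structure, and the nested spanning paths $\pi_K\subset\pi_{K+1}\subset\cdots$ union to an infinite component. Without a gap-bridging lemma and a scale hierarchy (or a genuine substitute for them), your renormalization step cannot be completed, and the stated constant plays no role in your argument.
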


Monte-Carlo simulations provide overwhelming evidence that the
condition in Theorem~\ref{conditional} indeed holds for some large $L$,
subject to the trustworthiness of the pseudo-random number generator and the
software used. Indeed, in $1000$ independent simulation runs of the process
with $L=13000$, the interval $[0,L]$ was good in $991$ cases, implying that
the hypothesis that the probability is $0.968$ or less can be rejected at the
$10^{-6}$ level.  See the appendix for details.

By a \df{monotone path} in a multi-matching we mean a sequence of vertices
$x_1<x_2<\cdots<x_k$ with the edges
$(x_1,x_2),(x_2,x_3),\ldots,(x_{k-1},x_k)$ all present.  As observed in the
proof of Theorem \ref{th:stable} (ii), no two edges cross in the 2-stub
stable multi-matching, and hence the same holds in a core multi-matching. If
$I$ is good, it follows that the core multi-matching on $I$ contains a
monotone path from the first quarter to the last quarter.  We call such a
path a \df{spanning path} of the good interval.

\begin{lemma}\label{combine}
Let $a<b<c<d$ be points of $\poi$, and suppose that the intervals $[a,b]$ and
$[c,d]$ are both longer that $[b,c]$.  If the 2-stub stable multi-matching
has a monotone path $\alpha$ from $a$ to $b$ and a monotone path $\delta$
from $c$ to $d$, then it has a monotone path from $a$ to $d$ which contains
$\alpha$ and $\delta$.
\end{lemma}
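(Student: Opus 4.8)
The plan is to show that the right endpoint $b$ of $\alpha$ and the left endpoint $c$ of $\delta$ lie in the \emph{same} connected component; the desired path is then essentially free. Recall from the proof of Theorem~\ref{th:stable}(ii) that in the 2-stub stable multi-matching no two edges cross and every vertex has degree $2$, so each component is either a finite ``cycle'' consisting of points $z_1<\cdots<z_m$ with the consecutive edges $(z_i,z_{i+1})$ together with a single closing edge $(z_1,z_m)$, or (by Lemma~\ref{le:both_directions}) a bi-infinite monotone path. Since $\alpha$ joins $a$ to $b$, these lie in one component $C_1$, and likewise $c,d$ lie in one component $C_2$. If $C_1=C_2$, then because $a<b<c<d$ and $\alpha,\delta$ are monotone sub-paths of this component — which, given the ordering, must be consecutive runs rather than the single closing edge — the consecutive run from $a$ to $d$ along the component is a monotone path containing both $\alpha$ and $\delta$, as required. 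It therefore remains to rule out $C_1\neq C_2$.

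The key observation is a lower bound on the longest edge at the extreme points of a finite component. At the rightmost point $u$ of $C_1$ the longest incident edge is the closing edge to the leftmost point of $C_1$; since $a\in C_1$ this gives $M_u\ge u-a$, and as $u\ge b$ we obtain $M_u\ge b-a>|b-c|$. Symmetrically, writing $v$ for the leftmost point of $C_2$, we get $M_v\ge d-v\ge d-c>|b-c|$. (If a component is infinite we already have an infinite component and are done, so we may assume both are finite.)

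Now suppose $C_1\neq C_2$. The closing edges $(\ell_1,u)$ of $C_1$ and $(v,r_2)$ of $C_2$ belong to the matching and hence do not cross, so the intervals $[\ell_1,u]$ and $[v,r_2]$ are either disjoint or nested. If they are disjoint then, since $[a,b]\subseteq[\ell_1,u]$ and $[c,d]\subseteq[v,r_2]$ with $b<c$, we must have $u<v$, whence $v-u\le c-b=|b-c|$; but then $M_u>|b-c|\ge v-u$ and $M_v>|b-c|\ge v-u$, so $u$ and $v$ desire each other and are matched, contradicting $C_1\neq C_2$. If the intervals are nested, say $C_2$ lies under the arc of $C_1$, then by the outerplanarity of a non-crossing graph all of $C_2$ lies strictly between two consecutive points $p<q$ of $C_1$; the edge $(p,q)$ then has length exceeding $r_2-v\ge d-c>|b-c|$, so $M_p>|b-c|$, while $p$ lies within distance $|b-c|$ of $v$ (one checks $b\le p$, since $b$ is a point of $C_1$ but cannot lie in $(p,q)$, so $v-p\le c-b$). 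Hence $p$ and $v$ desire each other and are matched — again contradicting $C_1\neq C_2$. The mirror-image nested case is identical. This forces $C_1=C_2$ and completes the proof.

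I expect the nested case to be the main obstacle: it requires invoking the outerplanar (non-crossing) structure to locate the enclosing consecutive pair $p<q$, and then a short computation to verify that the relevant distances are bounded by $|b-c|$, so that stability forces an edge between the two supposedly distinct components. The disjoint case and the same-component case are comparatively routine, as is the long-edge bound, which rests only on the fact that the closing edge is the longest edge at an extreme point of a finite component.
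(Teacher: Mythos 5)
Your treatment of the case where both components are finite is correct, but there is a genuine gap where you dismiss the infinite case. The parenthetical ``if a component is infinite we already have an infinite component and are done'' argues toward the wrong conclusion: Lemma \ref{combine} asserts the existence of a monotone path from $a$ to $d$ containing $\alpha$ and $\delta$, not the existence of an infinite component somewhere. If the component $C_1$ of $a,b$ were a bi-infinite monotone path and $C_2\neq C_1$, the lemma's conclusion would be \emph{false}, so this configuration must be ruled out, not embraced; and the lemma is invoked in Corollary \ref{goodness} precisely through its exact conclusion (a path containing prescribed subpaths), so it cannot be silently weakened to ``path or percolation.'' Fortunately the repair uses only machinery you already built. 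If exactly one of $C_1,C_2$ is infinite, it is a bi-infinite monotone path (proof of Theorem \ref{th:stable}(ii)), so by the non-crossing property the other, finite, component lies strictly inside a single gap $(p,q)$ between consecutive -- hence adjacent -- points of the infinite one; this is exactly your nested situation, and the same desire/stability computation applies (for $C_1$ infinite and $C_2$ finite with leftmost point $v$: $b\le p$, so $v-p\le c-b$, while $M_p\ge q-p>d-c>c-b$ and $M_v\ge d-v>c-b$, forcing the forbidden edge $(p,v)$; the mirror case is symmetric). If both are infinite they coincide by Proposition \ref{prop:unik}. With this patch your proof is complete.

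Granting that repair, your route is genuinely different from the paper's. The paper argues locally: extend $\alpha$ rightward and $\delta$ leftward as far as possible within $[b,c]$, to points $b'\le c'$; if they fail to meet, non-crossing forces the second neighbour of $b'$ (and of $c'$) to lie outside $(a,d)$, and the length hypotheses then make $(b',c')$ an unstable pair -- a contradiction obtained in a few lines, with no classification of components and no appeal to Lemma \ref{le:both_directions} or Proposition \ref{prop:unik}. Your proof instead establishes the global component structure (a finite non-crossing $2$-regular component is a consecutive path plus its closing edge -- a fact the paper states only as an unproved aside in Theorem \ref{th:stable}(ii), so strictly you should supply the short crossing argument for it) and then plays the extreme points of the two components against each other. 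This is longer and needs the extra case analysis above, but it makes the geometry of components explicit, which is arguably the more informative picture; the paper's maximal-extension trick is the more economical one.
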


\begin{proof}
Suppose on the contrary that there is no monotone path from $a$ to $d$
containing $\alpha$ and $\delta$. First extend the path $\alpha$ to the right
as far as possible within $[b,c]$; that is, let $b'\in[b,c]$ be as large as
possible such that there is a monotone path containing $\alpha$ from $a$ to
$b'$. Similarly extend $\gamma$ as far left as possible to $c'\in[b,c]$. By
our assumption, and since there are no crossing edges, we have $b'<c'$. Note
also that $[a,b']$ and $[c',d]$ are longer than $[b',c']$.  Now $b'$ is
adjacent to its neighbour in the monotone path from $a$, and to exactly one
other vertex $x$. By our assumptions, $x\notin [b',c']$, and therefore
$x\not\in(a,d)$, otherwise we would have crossing edges.  A similar argument
shows that $c'$ has a neighbour outside $(a,d)$. But now $(b',c')$ form an
unstable pair.
\end{proof}

\begin{corr}\label{goodness}
If at least $8$ of the $9$ intervals $[0,x],[x,2x],\ldots,[8x,9x]$ are good,
then so is $[0,9x]$.  Furthermore, under the same assumption, given any
spanning paths, one of each of the good short subintervals, there is a
spanning path of the long interval containing all of them.
\end{corr}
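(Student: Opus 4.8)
The plan is to assemble the spanning path of $[0,9x]$ from the given spanning paths of the good subintervals by repeated use of Lemma~\ref{combine}. Write $I_j=[(j-1)x,jx]$ for $j=1,\dots,9$, so that a spanning path of a good $I_j$ runs from a point $p_j\in[(j-1)x,(j-1)x+\tfrac{x}{4}]$ to a point $q_j\in[jx-\tfrac{x}{4},jx]$ and therefore has length at least $\tfrac{x}{2}$. Since at most one subinterval is bad, the good ones form at most two maximal runs of consecutive subintervals, and I would first merge each run into a single monotone path. For consecutive good $I_j,I_{j+1}$ the separating gap $[q_j,p_{j+1}]$ has length at most $\tfrac{x}{2}$, so each spanning path is at least as long as the gap; because the two relevant lengths are distances between distinct pairs of points, and all pairwise distances are distinct as in the definition of the core matching, equality is excluded and the inequality is strict. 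Iterating Lemma~\ref{combine} from left to right then fuses all spanning paths of a run into one monotone path.

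If no subinterval is bad, the single run $I_1,\dots,I_9$ fuses to a monotone path from $p_1\le\tfrac{x}{4}$ to $q_9\ge 9x-\tfrac{x}{4}$; these endpoints lie in the first quarter $[0,\tfrac{9x}{4}]$ and the last quarter $[\tfrac{27x}{4},9x]$ of $[0,9x]$, so the path is a spanning path of $[0,9x]$ containing every given path. The same argument applies unchanged when the bad subinterval is $I_1$ or $I_9$, since the good subintervals remain consecutive. When the bad subinterval is $I_k$ with $3\le k\le 7$, I would fuse the two runs $I_1,\dots,I_{k-1}$ and $I_{k+1},\dots,I_9$ separately and then bridge them across $I_k$ by one further application of Lemma~\ref{combine}: each run has length at least $2x-\tfrac{x}{2}=\tfrac{3x}{2}$, whereas the gap $[q_{k-1},p_{k+1}]$ across $I_k$ has length at most $x+\tfrac{x}{2}=\tfrac{3x}{2}$, so both outer paths are again at least as long as the inner gap, with equality excluded by distinct distances. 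The fused path runs from $p_1$ to $q_9$, spans $[0,9x]$, and contains all the given paths.

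The hard part will be the two remaining positions $k=2$ and $k=8$. For these the good subintervals split into a lone end-interval (say $I_1$) and a long run ($I_3,\dots,I_9$); the long run by itself reaches from $p_3\le\tfrac{9x}{4}$ to $q_9\ge\tfrac{35x}{4}$ and so is already a spanning path of $[0,9x]$, which settles goodness. The genuine difficulty is absorbing the stranded spanning path in $I_1$: the gap $[q_1,p_3]$ across the bad interval $I_2$ has length at least $x$, while the path in $I_1$ has length at most $x$, so Lemma~\ref{combine} cannot bridge it, and the matching across $I_2$ need not supply any connecting monotone path. I expect this to be the crux. My first move would be to check whether the use of the corollary in Section~\ref{MC-proof} really requires the fused path to contain \emph{every} subinterval's spanning path, or only the spanning path of the subinterval one is tracking up the scales; the latter is all that a single nested chain of spanning paths needs to force percolation, and it is delivered by the construction above. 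Failing that, one would have to give a separate argument tailored to the end positions.
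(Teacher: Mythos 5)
Your first two paragraphs reproduce the paper's own proof, arithmetic included: the paper likewise merges maximal runs of consecutive good subintervals by iterating Lemma~\ref{combine} (when the bad subinterval is one of $I_3,\ldots,I_7$, two runs with spanning paths of length greater than $2x-2(x/4)=3x/2$ flank a gap of length less than $x+2(x/4)=3x/2$; when $I_3,\ldots,I_7$ are all good, the run containing them reaches to within $2x+x/4=9x/4$ of each end of $[0,9x]$). So in every case you settle, your argument and the paper's coincide.

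The interesting point is your last paragraph. The crux you flag---absorbing the stranded spanning path of $I_1$ when $I_2$ is bad (or of $I_9$ when $I_8$ is bad)---is not resolved by the paper either: in that case its proof only exhibits the spanning path coming from the run $I_3,\ldots,I_9$ and says nothing about $I_1$. In fact no ``separate argument tailored to the end positions'' can exist, because the literal ``containing all of them'' clause fails there: with positive probability $I_1$ is good, $I_2$ is empty (hence bad), $I_3,\ldots,I_9$ are good, and there are no points for a long stretch to the left of $0$; then the two extreme vertices of $I_1$'s spanning path, each holding a free stub, are mutually closest compatible points and get matched to each other in the stable multi-matching, closing that path into a finite cycle, which no monotone path reaching the last quarter of $[0,9x]$ can contain. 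What the paper's proof (and yours) actually establishes is the weaker statement that the produced spanning path contains the given spanning paths of the good subintervals lying in the merged runs---in particular that of the middle subinterval $I_5$ whenever it is good. And your guess about the application is exactly right: in the proof of Theorem~\ref{conditional}, $I^k$ is precisely the middle ninth of $I^{k+1}$, so only the nested chain of spanning paths through the middle subinterval is ever used. In short, your proof is as complete as the published one, and you have correctly identified (and correctly repaired, via the intended use in Section~\ref{MC-proof}) an overstatement in the corollary itself.
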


\begin{proof}
Let the configuration outside $I:=[0,9x]$ be arbitrary and consider the
stable multi-matching.  Write $I_k=[(k-1)x,kx]$.  For any sequence of
consecutive good intervals $I_a,I_{a+1},\ldots,I_b$, by Lemma~\ref{combine}
we obtain a monotone path in their union reaching to within distance $x/4$ of
each end. If $I_3,I_4,I_5,I_6,I_7$ are all good, then the resulting path
reaches to within $2x+x/4=9x/4$ of each end of $I$, as required.  On the
other hand, if one of $I_3,I_4,\ldots,I_7$ is bad (but the other $8$
subintervals are good), then we obtain two paths of length greater than
$2x-2(x/4)=3x/2$ on either side of the bad subinterval, with a gap of length
less than $x+2(x/4)=3x/2$ in between, so another application of
Lemma~\ref{combine} provides the required spanning path.
\end{proof}

\begin{proof}[Proof of Theorem \ref{conditional}]
Let
$$
I^k:=\Bigl[-\frac{9^kL}{2},\frac{9^kL}{2}\Bigr],
$$
and let $p_k$ be the probability that $I^k$ is good. Thus $p_0>0.968$, and by
Corollary \ref{goodness}, $p_{k+1}\geq f(p_k)$ where
$$f(p):=p^9+9p^8(1-p).$$
It follows by an elementary computation that $p_k\to 1$ as $k\to\infty$,
and indeed $\sum_k(1-p_k)<\infty$. Hence by the Borel-Cantelli lemma, a.s.\
$I^k$ is good for all sufficiently large $k$.  Moreover, since
$\sum_k(1-f(p_k))<\infty$, for all sufficiently large $k$, the interval $I^k$
can be divided into $9$ equal intervals of which at least $8$ are good.  By
Corollary \ref{goodness} it follows that for some (random) $K$ we may find
monotone paths $\pi_K,\pi_{K+1},\pi_{K+2},\ldots$, each contained in the
next, with $\pi_k$ a spanning path of $I^k$ for each $k$.  Then
$\bigcup_{k\geq K} \pi_k$ is an infinite connected graph in the stable
multi-matching.
\end{proof}

\section{Some counterexamples}\label{further}

Theorem \ref{th:smrd} asserts that the random direction stable
multi-matching has no infinite component when all vertices have
degree 2, and that it has long edges in the sense that
$\E^*[X^{1/2}]=\infty$. Furthermore, it follows from Theorem
\ref{th:stable} that existence of an infinite component in the
stable-multi matching with all degrees equal to 2 is equivalent
to $N<\infty$. This might lead one to suspect that there is a
simple relation between the component structure and the edge
length for $\mu(\{2\})=1$ that holds for any matching scheme.
Below, we give two examples of factor matching schemes that
demonstrate that this is not the case.

\paragraph{Example 1.} Our first example is a matching scheme where all
components are infinite and where also the number of edges
crossing the origin is a.s.\ infinite. Note that, if the origin
is crossed by infinitely many edges, then also $N=\infty$ and
thus, by Lemma \ref{le:NM}, $\E^*[M]=\infty$. Existence of an
infinite component hence does not imply short edges in any of
these respects.

To describe the matching scheme, let each point in $[\poi]$ be
equipped with two stubs. Recall that the stable multi-matching
in the special case where $\mu(\{1\})=1$ is known as the stable
matching. First use one stub per point to form edges according
to the stable matching of the points. Then orient the remaining
stub at each point in the opposite direction (left or right)
from that of the first stub, and connect these directed stubs
according to the procedure used for the random direction stable
matching.  This gives a graph where each point has one edge
connected to the right and one edge connected to the left --
that is, all points are birds in the in the terminology used in
the proof of Theorem \ref{th:smrd} -- which implies that all
components in the graph are infinite. That the number of edges
crossing the origin is infinite almost surely follows from
Proposition \ref{prop:odd_deg} applied to the configuration
after the first stub per point is
connected.\hfill$\Box$\medskip

\paragraph{Example 2.} The next example is a matching scheme that
gives almost surely only finite components and where the
expected edge length is finite. Finite expected edge length
hence does not imply existence of an infinite component.

The matching scheme proceeds by dividing the vertices into
groups of size at least $3$ as follows. Call a point of $\poi$
a {\bf seed} if it has some other point within distance $1$.
Call a seed $x$ {\bf good} if the number of non-seed points
between $x$ and the next seed to its right is at least $2$. Now
whenever $x<y$ are two consecutive good seeds, let all the
points in $[x,y)$ constitute one group.

Define the matching as follows. For a group
$x_1<\cdots<x_k$, connect the two stubs per vertex to form the
edges $(x_i,x_{i+1})$ for $i=1,\ldots,k-1$ and the edge
$(x_1,x_k)$. This clearly gives a configuration with almost
surely finite components and finite expected edge
length.\hfill$\Box$\medskip

Next we give simple examples of translation-invariant point processes on
$\RR$ for which the stable multi-matching in the case $\mu(\{2\})=1$ provably
does, and does not, have an infinite component.

\paragraph{Example 3.} Let $(X_i)_{i\in\Z}$ be i.i.d.\ and uniformly
distributed on $[0,1/3]$, and let $U$ be independent and uniform on $[0,1]$.
Consider the point process with support $\{i+X_i+U:i\in\Z\}$.  It is easy to
see that each point connects to its left-neighbour and its right-neighbour,
so there is an infinite component.

\enlargethispage*{1cm}
\paragraph{Example 4.}  \sloppypar
Let $(X_{i,j})_{i\in\Z,\,j=1,2,3}$ be i.i.d.\ and uniformly distributed on
$[0,1/3]$, and let $U$ be independent and uniform on $[0,1]$. Consider the
point process with support $\{i+X_{i,j}+U:i\in\Z,j=1,2,3\}$. Then each
component has size exactly $3$.

\section{Open problems}\label{open}

\subsubsection*{The random direction stable multi-matching}

For degree distributions other than $\mu(\{2\})=1$, it remains
an open problem to determine if the random direction stable
multi-matching generates an infinite component.

\subsubsection*{The stable multi-matching}

\begin{table}\centering
\resizebox{\textwidth}{!}{%
\begin{tabular}{|c|c||c|c|c|c|c|}
  \hhline{~~|-----}
  \multicolumn{2}{c|}{} & \multicolumn{5}{|c|}{Number of points}\\
  \hhline{~~|-----}
  \multicolumn{2}{c|}{} & $2^{10}$ & $2^{12}$ & $2^{14}$ & $2^{16}$ & $2^{18}$ \\
  \hhline{--|=====}
  \multirow{10}{*}{\rotatebox{90}{Expected degree}}
  & 2 & $.244\pm.099$ & $.291\pm.044$ & $.297\pm.014$ & $.287\pm.009$ & $.292\pm.005$ \\
  \cline{2-7}
  & 3 & $.278\pm.099$ & $.154\pm.029$ & $.049\pm.006$ & $.017\pm.003$ & $.006\pm.001$ \\
  \cline{2-7}
  & 4 & $.802\pm.158$ & $.728\pm.232$ & $.653\pm.201$ & $.366\pm.207$ & $.399\pm.143$ \\
  \cline{2-7}
  & 5 & $.974\pm.018$ & $.933\pm.114$ & $.815\pm.183$ & $.672\pm.258$ & $.321\pm.102$ \\
  \cline{2-7}
  & 6 & $.989\pm.009$ & $.990\pm.004$ & $.975\pm.047$ & $.933\pm.161$ & $.755\pm.207$ \\
  \cline{2-7}
  & 7 & $.994\pm.008$ & $.997\pm.003$ & $.989\pm.020$ & $.996\pm.000$ & $.961\pm.112$ \\
  \hhline{|~|======}
  & 2.1 & $.071\pm.023$ & $.024\pm.005$ & $.009\pm.002$ & $.003\pm.000$ & $.001\pm.000$\\
  \cline{2-7}
  & 2.5 & $.132\pm.062$ & $.043\pm.018$ & $.018\pm.006$ & $.006\pm.001$ & $.001\pm.000$\\
  \cline{2-7}
  & 3.5 & $.472\pm.172$ & $.244\pm.071$ & $.146\pm.061$ & $.050\pm.014$ & $.016\pm.004$\\
  \cline{2-7}
  & 4.5 & $.992\pm.011$ & $.888\pm.110$ & $.529\pm.138$ & $.298\pm.132$ & $.129\pm.059$\\
  \hline
\end{tabular}
}
 \caption{Simulation results for the stable multi-matching of uniformly
random points on the cycle.  The proportion of points in the largest
connected component is indicated as ``sample mean $\pm$ sample standard
deviation'' for a sample of size $10$.  The degree $D$ is either a constant
integer, or takes two consecutive integer values with probabilities
determined by the indicated expected value.}\label{sim}
\end{table}


Firstly, it would of course be desirable to turn the ``statistical proof'' of
percolation for $D\equiv 2$ in $d=1$ into a fully rigorous proof.
Furthermore, it remains an open problem to determine whether there exists an
infinite component the stable multi-matching for other degree distributions
(an exception being the case $D\in\{1,2\}$ with $\PP(D=1)>0$; see
\cite{DHH}). Another interesting case arises when most vertices have $2$
stubs, but a small fraction have $3$; this case can be expected to be very
different from the 2-stub case since there are local configurations which can
end a long path. Indeed, simulations appear to indicate that the proportion of vertices in the
largest component converges to $0$ as the system size increases, thus
suggesting no infinite component; see Table~\ref{sim} (lines 9-10). The observation that
local modifications can end a long path extends to any degree distribution
with support on at least one odd integer. Is it the case that the stable
multi-matching has an infinite component in $d=1$ if and only if the degree
distribution has support only on even integers?  The results in
Table~\ref{sim} appear consistent with this hypothesis.

\subsubsection*{Iterated stable matching}

Yet another multi-matching scheme is obtained by repeatedly
applying the stable matching of the points with the restriction
that multiple edges are not allowed. More specifically, first
take the stable matching of $[\poi]$, connect the points
accordingly and remove one stub per point. Then consider the
stable matching of the points that have at least one stub left
on them and with the modification that two points that already
have an edge between them cannot be matched. This matching is
obtained by repeatedly matching mutually nearest neighbors in
the set of points with at least one stub left on them, avoiding
matchings of points that are already connected. As remarked in
\cite[Remark 2.2]{DHH}, the proof of \cite[Proposition
2.2]{DHH} is easily modified to show that this yields a perfect
matching. Connect the points according to this matching and
remove one stub from each point that is connected. Repeat
indefinitely.

Does this matching scheme generate an infinite component? How
does the answer depend on the degree distribution? Note that it
follows from Proposition \ref{prop:odd_deg} that the number of
edges that cross the origin is a.s.\ infinite already after the
first stub of the vertices has been connected. For degree
distributions with degrees larger than 1 however the matching
contains crossing edges. This means for instance that the proof
of Theorem \ref{th:stable}(b) cannot be applied to draw the
same conclusion (that $N<\infty$ if there is an infinite
component) for the iterated stable matching.

\section*{Appendix -- simulation code}

The Python 2.6 code below was used to verify that the interval $[0,13000]$
was a good interval in 991 out of 1000 cases. Since
$$\PP\bigl[\mbox{Binomial}(1000,0.968)\geq 991\bigr]<10^{-6},
$$
this gives grounds to reject the hypothesis that $\PP([0,13000] \text{ is
good})\leq 0.968$ at the $10^{-6}$ level.

The code uses Python's built-in implementation of the Mersenne Twister, one
of the most extensively tested pseudo-random number generators.  The
experiment was also repeated using two other pseudo-random number generators,
and using an alternative method of generating a Poisson point process, with
consistent results.

The following observations simplify and speed up the construction of the core
multi-matching in the $2$-stub $1$-dimensional case.  In place of the
complement of an interval $[a,b]$, it suffices to consider distances to the
endpoints $\{a,b\}$.  At any stage of the core multi-matching algorithm, the
arrow from a point $x$ with an unused stub must point to one of $x$'s two
closest neighbours on the left or its two closest neighbours on the right
among the set of points of $\widetilde P$ with unused stubs; this is because
at most one other point can have an edge to $x$ already.  Also, if the arrows
of $x,y$ point to each other, then there is no other point $z$ with unused
stubs between them; indeed such a $z$ cannot already have edges to both $x$
and $y$, so one of $x$ and $y$ would instead point to $z$.

{\scriptsize
\begin{verbatim}
from random import *
from math import *

def poi(a):        # Poisson random variable with mean a
    t=-1
    while a>0:
        a+=log(random())
        t+=1
    return t

def setup(a):
    global n,x,stubs,e
    n=poi(a)
    x=[random() for i in xrange(n)]
    x.sort()       # x = sorted list of n=poi(a) random points in [0,1]
    stubs=[2]*n    # 2 stubs per point
    e=set([])      # e = set of edges

def xx(i):         # position of point i, or endpoint of [0,1] for i outside range
    if i<0:
        return 0.
    elif i>=n:
        return 1.
    else:
        return x[i]

def corematch():
    cont=True
    while cont:
        cont=False
        active=[-1,-1]+[i for i in xrange(n) if stubs[i]]+[n,n]
                         # points with unused stubs, plus 2 dummy points at each end
        arrow=[None]*len(active)
        for j in xrange(2,len(active)-2):
            l=[(abs(xx(active[j])-xx(active[k])),k)     # find distances
               for k in j-2,j-1,j+1,j+2                 # to 2 neighbours on each side
               if tuple(sorted([active[j],active[k]])) not in e]  # if no edge already
            if l:
                arrow[j]=min(l)[1]                        # arrow to closest
        for j in xrange(2,len(active)-3):
            if arrow[j]==j+1 and arrow[j+1]==j:         # found pair with mutual arrows
                                                        # (must be neighbours)
                e.add((active[j],active[j+1]))          # add edge and remove stubs
                stubs[active[j]]-=1
                stubs[active[j+1]]-=1
                cont=True                               # keep going if some edge added

def components():                                       # find components of graph
    nbrs=dict((i,[]) for i in xrange(n))
    for (i,j) in e:
        nbrs[i].append(j)
        nbrs[j].append(i)
    done=[False]*n
    ans=[]
    for i in xrange(n):
        if not done[i]:
            cur=[i]
            done[i]=True
            for j in cur:
                for k in nbrs[j]:
                    if not done[k]:
                        cur.append(k)
                        done[k]=True
            ans.append(sorted(cur))
    return ans

def good():                                             # is the interval good?
    return any(x[c[0]]<.25 and x[c[-1]]>.75 for c in components())

seed(12345)
a=13000
k=1000
g=0
for i in xrange(k):
    setup(a)
    corematch()
    if good():
        g+=1
    print g,'/',(i+1),'..',
print
print 'Interval of length',a,'was good',g,'times out of',k
\end{verbatim}
}

\end{document}